\title{Adelic superrigidity and profinitely solitary lattices}
\author[H. Kammeyer]{Holger Kammeyer}
\author[S. Kionke]{Steffen Kionke}
 \address[H. Kammeyer]{Institute for Algebra and Geometry, Karlsruhe Institute of Technology, 76131 Karlsruhe, Germany}
 \email{holger.kammeyer@kit.edu}
\address[S. Kionke]{FernUniversit\"at in Hagen, Fakult\"at Mathematik und Informatik,
Lehrgebiet Algebra,
58084 Hagen, Germany}
\email{steffen.kionke@fernuni-hagen.de}
\subjclass[2010]{22E40, 20E18}
\keywords{profinite rigidity, arithmeticity, superrigidity}
\theoremstyle{plain}
\newtheorem{theorem}{Theorem}
\newtheorem{lemma}[theorem]{Lemma}
\newtheorem{corollary}[theorem]{Corollary}
\theoremstyle{definition}
\newtheorem{definition}[theorem]{Definition}
\newtheorem{remark}[theorem]{Remark}
\newtheorem{example}[theorem]{Example}
\numberwithin{equation}{section}
\numberwithin{theorem}{section}
\providecommand{\ignore}[1]{}
\providecommand{\alg}[1]{\mathbf{#1}}
\providecommand{\R}{\mathbb{R}}
\providecommand{\Q}{\mathbb{Q}}
\providecommand{\C}{\mathbb{C}}
\providecommand{\K}{\mathbb{K}}
\newcommand{\RN}[1]{\textup{\uppercase\expandafter{\romannumeral#1}} 
}
\newcommand*{\arXiv}[1]{ \href{http://www.arxiv.org/abs/#1}{arXiv:\textbf{#1}}}
\DeclareMathOperator{\id}{Id}
\DeclareMathOperator{\Hom}{Hom}
\DeclareMathOperator{\Aut}{Aut}
\DeclareMathOperator{\pr}{pr}
\providecommand{\normal}{\trianglelefteq}
\providecommand{\fg}{\mathfrak{g}}
\providecommand{\fh}{\mathfrak{h}}
\providecommand{\bbN}{\mathbb{N}}
\providecommand{\bbR}{\mathbb{R}}
\providecommand{\bbQ}{\mathbb{Q}}
\providecommand{\bbZ}{\mathbb{Z}}
\providecommand{\bbA}{\mathbb{A}}
\providecommand{\bbC}{\mathbb{C}}
\DeclareMathOperator{\SL}{SL}
\providecommand{\bHom}{\underline{\Hom}_{\text{gr}}}
\begin{document}
\selectlanguage{english}

\begin{abstract}
By arithmeticity and superrigidity, a commensurability class of lattices in a higher rank Lie group is defined by a unique algebraic group over a unique number subfield of \(\R\) or \(\C\).  We prove an adelic version of superrigidity which implies that two such commensurability classes define the same profinite commensurability class if and only if the algebraic groups are adelically isomorphic.  We discuss noteworthy consequences on profinite rigidity questions.
\end{abstract}

\maketitle

\section{Introduction}

Let \(\K\) be either \(\R\) or \(\C\) and let \(\mathbf{G}\) be a connected absolutely almost simple linear algebraic \(\K\)-group with \(\mathrm{rank}_\K \mathbf{G} \ge 2\).  

\begin{definition} \label{def:g-arithmetic-pair}
A \emph{\(\mathbf{G}\)-arithmetic pair} \((K, \mathbf{H})\) consists of a dense subfield \(K \subset \mathbb{K}\) with \([K : \Q] < \infty\) and a connected simply connected absolutely almost simple linear algebraic \(K\)-group \(\mathbf{H}\) which is \(\K\)-isogenous to \(\mathbf{G}\) and anisotropic at all other infinite places of \(K\).
\end{definition}

For any \(\mathbf{G}\)-arithmetic pair \((K, \mathbf{H})\) the arithmetic subgroups of \(\mathbf{H}(K)\) 
determine a unique commensurability class of lattices \(\Gamma_{(K, \mathbf{H})} \subset \mathbf{G}(\mathbb{K})\). A reformulation of Margulis seminal work provides a commensurability classification of lattices in \(\mathbf{G}(\mathbb{K})\), where \emph{arithmeticity} gives \emph{surjectivity}, and \emph{superrigidity} gives \emph{injectivity} in the following statement.
\begin{theorem}[Margulis' commensurability classification] \label{thm:commensurability-classification}
  Assigning \((K, \mathbf{H}) \mapsto \Gamma_{(K, \mathbf{H})}\) defines a bijection \(\Phi\) from isomorphism classes of \(\mathbf{G}\)-arithmetic pairs to commensurability classes of lattices in \(\mathbf{G}(\K)\).
\end{theorem}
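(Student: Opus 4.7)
The plan is to deduce surjectivity of $\Phi$ from Margulis' arithmeticity theorem and injectivity from Margulis' superrigidity theorem. For surjectivity, I would start with an arbitrary lattice $\Gamma \subset \mathbf{G}(\K)$. Since $\mathrm{rank}_\K \mathbf{G} \ge 2$, the associated real Lie group has real rank at least two, so Margulis' arithmeticity theorem applies and guarantees that $\Gamma$ is arithmetic. Unpacking the definition of arithmeticity in the absolutely almost simple setting, one produces a number field $K$ (realized as a subfield of $\K$) and a simply connected absolutely almost simple $K$-group $\mathbf{H}$ together with a $\K$-isogeny $\mathbf{H} \to \mathbf{G}$ such that $\mathbf{H}(\calO_K)$ and $\Gamma$ are commensurable after transport via this isogeny. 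The Borel-Harish-Chandra theorem places $\mathbf{H}(\calO_K)$ as a lattice in $\prod_{v \mid \infty} \mathbf{H}(K_v)$, and since its projection to the factor $\mathbf{G}(\K)$ must itself be a lattice, the remaining archimedean factors are forced to be compact; that is, $\mathbf{H}$ is anisotropic at all other infinite places. This exhibits $(K, \mathbf{H})$ as a $\mathbf{G}$-arithmetic pair with $\Phi(K, \mathbf{H}) = [\Gamma]$.

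For injectivity, suppose $(K_1, \mathbf{H}_1)$ and $(K_2, \mathbf{H}_2)$ both map to the same commensurability class. Fixing a common finite-index subgroup $\Lambda$ of $\Gamma_{(K_1, \mathbf{H}_1)}$ and $\Gamma_{(K_2, \mathbf{H}_2)}$, I view the inclusion $\Lambda \hookrightarrow \mathbf{H}_2(\K)$ as a linear representation of the arithmetic lattice $\Lambda \subset \mathbf{H}_1(K_1 \otimes_\Q \R)$. Margulis' superrigidity theorem then extends this representation, up to bounded error, to a morphism of algebraic groups $\mathbf{H}_1 \to \mathbf{H}_2$ over $\K$. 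Applying the standard descent argument (as in the books of Margulis or Morris), this morphism is defined over a common number subfield of $\K$, and Zariski density of $\Lambda$ together with simple connectedness promotes it to a $K$-isomorphism identifying $K_1$ with $K_2$ as subfields of $\K$ and $\mathbf{H}_1$ with $\mathbf{H}_2$ as $K$-groups.

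The main obstacle is the field-of-definition bookkeeping in the injectivity step. Superrigidity readily yields a morphism of algebraic groups over $\K$, but one must verify that it descends to the correct number field and that the two data $K_1, K_2$ coincide as \emph{subfields} of $\K$ rather than merely as abstract fields. The anisotropy condition in Definition~\ref{def:g-arithmetic-pair} is crucial here: any Galois twist of $\mathbf{H}$ at a different archimedean place would produce a compact real Lie group, which cannot contain $\Lambda$ as a lattice. This rigidifies the choice of embedding $K \hookrightarrow \K$ and is what ultimately makes $\Phi$ injective.
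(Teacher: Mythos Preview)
Your approach is essentially the paper's: surjectivity from arithmeticity, injectivity from superrigidity, with the anisotropy condition at the other infinite places pinning down the embedding \(K \hookrightarrow \K\). The paper streamlines your descent step by invoking \emph{algebraic} superrigidity in the form of Definition~\ref{def:superrigidity}, whose output already includes the field embedding \(\sigma \colon K_1 \to K_2\) (so no separate descent is needed), and it records one wrinkle you pass over: when \(\K = \C\) the subfield is determined only up to complex conjugation, which is why that ambiguity is built into the paper's notion of isomorphism of \(\mathbf{G}\)-arithmetic pairs.
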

Here, two \(\mathbf{G}\)-arithmetic pairs are \emph{isomorphic} if the subfields are equal and the groups are isomorphic, cf.\,p.\,\pageref{page:g-arithmetic-pairs-iso}.  In~\cite{Kammeyer-Kionke:rigidity}, we recently showed that for most (but not all) groups \(\mathbf{G}\), \emph{profinite commensurability} is a strictly weaker equivalence relation on the set of lattices than commensurability.  The main result of this article is a profinite commensurability classification of lattices in terms of \(\mathbf{G}\)-arithmetic pairs; provided that \(\mathbf{G}\) has the \emph{congruence subgroup property} (CSP).

\begin{theorem}[Profinite commensurability classification] \label{thm:profinite-commensurability-classification}
  If \(\mathbf{G}\) has CSP, then \(\Phi\) descends to a bijection from \emph{local isomorphism classes} of \(\mathbf{G}\)-arithmetic pairs to \emph{profinite commensurability classes} of lattices.
\end{theorem}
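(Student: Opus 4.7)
The plan is to deduce the profinite commensurability classification from the adelic superrigidity theorem announced in the abstract, with CSP serving as the bridge between profinite completions of lattices and adelic points of algebraic groups. The key translation step is that, for a $\mathbf{G}$-arithmetic pair $(K,\mathbf{H})$ with representative lattice $\Gamma = \Gamma_{(K,\mathbf{H})}$, CSP asserts finiteness of the congruence kernel of $\mathbf{H}(K)$; hence up to finite error the profinite completion $\widehat{\Gamma}$ is identified with the closure of $\Gamma$ inside $\mathbf{H}(\mathbb{A}_K^f)$, a compact open subgroup whose commensurability class depends only on the pair (strong approximation matches the commensurability classes of any two arithmetic subgroups). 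A local isomorphism of pairs should then be precisely an isomorphism of $K$-group schemes after base change to the finite adeles, a natural adelic refinement of the isomorphism relation of Definition \ref{def:g-arithmetic-pair}.

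With this dictionary set up, well-definedness of the descended map is direct: locally isomorphic pairs produce isomorphic ambient adelic groups, and hence commensurable profinite completions of the associated lattices. Surjectivity is automatic, since every profinite commensurability class is a union of commensurability classes and $\Phi$ is surjective by Theorem \ref{thm:commensurability-classification}. The substance of the theorem lies in injectivity. Assume $\Gamma_{(K,\mathbf{H})}$ and $\Gamma_{(K',\mathbf{H}')}$ have commensurable profinite completions; after passing to commensurable subgroups and invoking CSP on both sides, this yields an abstract topological isomorphism between compact open subgroups of $\mathbf{H}(\mathbb{A}_K^f)$ and $\mathbf{H}'(\mathbb{A}_{K'}^f)$ modulo finite kernels. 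I would then apply the adelic superrigidity theorem of the paper to promote this topological isomorphism to an algebraic one, induced by an isomorphism of the pairs after finite adelic base change. That delivers a local isomorphism of $(K,\mathbf{H})$ and $(K',\mathbf{H}')$.

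The main obstacle is precisely the step just described: upgrading a topological isomorphism of compact open subgroups of adelic points to an algebro-geometric adelic isomorphism, and simultaneously matching the a priori unrelated number fields $K$ and $K'$ inside $\mathbb{K}$. This is exactly what adelic superrigidity is designed to accomplish, and once that tool is available the theorem should follow with only technical bookkeeping — passing between commensurable subgroups, tracking the finite congruence kernel, and carrying along the $\mathbb{K}$-isogeny data implicit in Definition \ref{def:g-arithmetic-pair}. The overall strategy is therefore to reduce the profinite problem to the adelic one cleanly, and let adelic superrigidity do the heavy lifting.
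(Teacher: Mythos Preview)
Your proposal is correct and matches the paper's approach: well-definedness and surjectivity come from CSP plus strong approximation (the paper's Theorem~\ref{thm:profinnitely-commensurable-iff-locally-isomorphic}, direction \eqref{item:local-isomorphism}$\Rightarrow$\eqref{item:profinitely-commensurable}), while injectivity is obtained by feeding a profinite isomorphism of finite-index subgroups into the adelic superrigidity machinery (Theorem~\ref{thm:profinite-isomorphism-adelic}). One minor sharpening: in the paper's treatment the injectivity direction does not actually require CSP---Theorem~\ref{thm:profinite-isomorphism-adelic} takes as input only an isomorphism $\widehat{\Gamma}\cong\widehat{\Delta}$ and the map $\Gamma\to\widehat{\Delta}\to\mathbf{H}(\mathbb{A}^f_L)$, so you can skip the preliminary passage through compact open subgroups and apply adelic superrigidity directly to the discrete arithmetic group.
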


We say that \(\mathbf{G}\) has \emph{CSP} if the \(K\)-group \(\mathbf{H}\) in \emph{every} \(\mathbf{G}\)-arithmetic pair has finite congruence kernel.  According to a conjecture of Serre~\cite{Platonov-Rapinchuk:algebraic-groups}*{(9.45), p.\,556}, requiring CSP is redundant under our assumption that \(\mathrm{rank}_{\mathbb{K}} \mathbf{G} \ge 2\).  The conjecture remains open only for certain anisotropic forms of type \(A_n\) and \(E_6\).  A recent survey with the precise definitions and statements can be found in \cite{Kammeyer-Kionke:rigidity}*{Appendix~A}.  Two \mbox{\(\mathbf{G}\)-arithmetic} pairs are \emph{locally isomorphic} if the groups are isomorphic over a \emph{local isomorphism} of the number fields of definition by which we mean an isomorphism of the finite adele rings; cf.\,Definition~\ref{def:locally-isomorphic}.  Two groups \(\Gamma\) and \(\Lambda\) are \emph{profinitely commensurable} if an open subgroup of \(\widehat{\Gamma}\) is isomorphic to an open subgroup of \(\widehat{\Lambda}\).

The two theorems reveal that the arithmetic of the number field \(K\) in a \(\mathbf{G}\)-arithmetic pair \((K, \mathbf{H})\), as well as the realization as subfield of \(\mathbb{K}\), play a prominent role in the classification.  This becomes particularly apparent for real forms \(\mathbf{G}\) of the exceptional types \(E_8\), \(F_4\), and \(G_2\) because these Cartan--Killing types have unique \(\mathfrak{p}\)-adic forms.  For such~\(\mathbf{G}\), we will see that consequently, any given totally real number subfield \(K \subset \mathbb{R}\) sits in a unique \(\mathbf{G}\)-arithmetic pair \((K, \mathbf{H})\).  So we may write \(\Gamma_{(K, \mathbf{H})} = \Gamma_K\) and the profinite commensurability classification of lattices translates entirely into a problem of algebraic number theory.

\begin{theorem} \label{thm:exceptional-cases}
  Let \(\mathbf{G}\) be the simply connected \(\R\)-group of type \(F_{4(4)}\), \(G_{2(2)}\), \(E_{8(8)}\), or \(E_{8(\text{-}24)}\).  Then assigning \(K \mapsto \Gamma_K\) defines a bijection
  \begin{enumerate}[(i)]
  \item \label{item:comm-subfields} from totally real number subfields of~\(\R\) to commensurability classes of lattices in \(\mathbf{G}(\R)\), which descends to a bijection
  \item \label{item:prof-comm-adelic} from local isomorphism classes of totally real number fields to profinite commensurability classes of lattices in \(\mathbf{G}(\R)\).
  \end{enumerate}  
\end{theorem}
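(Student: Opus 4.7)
The plan is to reduce Theorem~\ref{thm:exceptional-cases} to Theorems~\ref{thm:commensurability-classification} and~\ref{thm:profinite-commensurability-classification} by showing that for the four \(\mathbf{G}\) under consideration every totally real number subfield \(K\subset\R\) is the first component of a \emph{unique} \(\mathbf{G}\)-arithmetic pair \((K,\mathbf{H}_K)\), and that every \(\mathbf{G}\)-arithmetic pair arises in this way. Since CSP holds for all forms of \(F_4\), \(G_2\), and \(E_8\) (Serre's conjecture being established outside type \(A_n\) and \(E_6\) as noted after Theorem~\ref{thm:profinite-commensurability-classification}), Theorem~\ref{thm:profinite-commensurability-classification} will be applicable. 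The heart of the argument is a Galois-cohomological classification of \(K\)-forms, and I expect the appeal to the Hasse principle for the simply connected exceptional groups to be the main technical ingredient.

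For the central step, first observe that if \((K,\mathbf{H})\) is any \(\mathbf{G}\)-arithmetic pair, then \(K\) must be totally real: a connected semisimple group over \(\C\) is automatically split and hence not anisotropic, so \(K\) can have no complex place. Conversely, fix a totally real \(K\subset\R\). Because \(F_4\), \(G_2\), and \(E_8\) have trivial center and no non-trivial outer automorphisms, the simply connected split form \(\mathbf{G}_0\) over \(K\) satisfies \(\Aut(\mathbf{G}_0)=\mathbf{G}_0\), so every \(K\)-form of \(\mathbf{G}\) is an inner twist and is classified by \(H^1(K,\mathbf{G}_0)\). By the Hasse principle for simply connected exceptional groups (Harder for \(F_4\) and \(G_2\), Chernousov for \(E_8\)) combined with Kneser's vanishing \(H^1(K_v,\mathbf{G}_0)=0\) at all non-archimedean \(v\), the restriction map \(H^1(K,\mathbf{G}_0)\to\prod_{v\mid\infty}H^1(K_v,\mathbf{G}_0)\) is bijective. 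Prescribing at the distinguished embedding the form isogenous to \(\mathbf{G}\) and at every other real place the unique compact form (which exists for each of the three types) therefore singles out a unique \(K\)-form \(\mathbf{H}_K\); the remark in the excerpt that these Cartan--Killing types have unique \(\mathfrak p\)-adic forms then identifies \(\mathbf{H}_K\otimes_K K_v\) with the split form at every finite place~\(v\).

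Combining this uniqueness with Theorem~\ref{thm:commensurability-classification} shows that \(K\mapsto(K,\mathbf{H}_K)\mapsto\Gamma_K\) is a well-defined bijection from totally real number subfields of \(\R\) onto commensurability classes of lattices in \(\mathbf{G}(\R)\), proving~(\ref{item:comm-subfields}). For~(\ref{item:prof-comm-adelic}), Theorem~\ref{thm:profinite-commensurability-classification} further reduces the claim to the assertion that \((K,\mathbf{H}_K)\) and \((K',\mathbf{H}_{K'})\) are locally isomorphic precisely when \(K\) and \(K'\) are locally isomorphic as number fields. The ``only if'' direction is immediate from Definition~\ref{def:locally-isomorphic}. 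Conversely, any ring isomorphism \(\alpha\colon\A_f(K)\xrightarrow{\sim}\A_f(K')\) induces a bijection \(v\leftrightarrow\alpha(v)\) of finite places together with topological isomorphisms \(K_v\cong K'_{\alpha(v)}\); since \(\mathbf{H}_K\otimes_K K_v\) and \(\mathbf{H}_{K'}\otimes_{K'}K'_{\alpha(v)}\) are each the unique split exceptional form over their local field, they are isomorphic compatibly with \(\alpha\), and the place-wise isomorphisms glue to an isomorphism \(\mathbf{H}_K\otimes_K\A_f(K)\cong\mathbf{H}_{K'}\otimes_{K'}\A_f(K')\) covering \(\alpha\). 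The main obstacle is the cohomological uniqueness argument in the middle paragraph; once this is secured, (\ref{item:comm-subfields}) is immediate and (\ref{item:prof-comm-adelic}) is a formal consequence of the local rigidity of \(\mathfrak p\)-adic exceptional forms.
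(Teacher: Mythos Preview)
Your strategy matches the paper's: reduce to Theorems~\ref{thm:commensurability-classification} and~\ref{thm:profinite-commensurability-classification}, use that $F_4$, $G_2$, $E_8$ have trivial center and no diagram automorphisms so that $K$-forms are classified by $H^1(K,\mathbf{G}_0)$, and then invoke the bijectivity of $H^1(K,\mathbf{G}_0)\to\prod_{v\mid\infty}H^1(K_v,\mathbf{G}_0)$ (Hasse principle plus Kneser vanishing) to pin down a unique $\mathbf{H}_K$ with prescribed behavior at the archimedean places. For part~(\ref{item:comm-subfields}) your argument is essentially complete and coincides with the paper's, with the cosmetic difference that the paper twists by $\mathbf{H}$ itself rather than by the split form.

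The one genuine gap is in your last step for~(\ref{item:prof-comm-adelic}). You assert that the place-wise isomorphisms $\mathbf{H}_K\otimes K_v\cong\mathbf{H}_{K'}\otimes K'_{\alpha(v)}$ ``glue'' to an isomorphism over $\mathbb{A}^f_K\cong\mathbb{A}^f_{K'}$. This is not automatic: $\mathbb{A}^f_K$ is a \emph{restricted} product, so a family of $K_v$-isomorphisms defines a morphism over $\prod_v K_v$ but not, a priori, over $\mathbb{A}^f_K$. Arbitrary local isomorphisms need have no integrality, and an isomorphism over the full product does not descend to the adeles without further input. The paper handles this carefully: it fixes smooth integral models of $\mathbf{H}_K$ and $\mathbf{H}_{K'}$ away from a finite set $S$ of primes, observes that these exceptional groups are split over every finite residue field, lifts the residue-field isomorphisms to isomorphisms over the local rings $\mathcal{O}_{K,v}$ via the smoothness/lifting results of SGA~3 (Exp.~XXIV), and only then assembles everything via Lemma~\ref{lemma:tuple-yields-morphism} and descends from $\prod_v K_v$ to $\mathbb{A}^f_K$ via Lemma~\ref{lemma:defined-over-subalgebra}. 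You should either replicate this argument or explain concretely why, for split Chevalley schemes over $\mathcal{O}_K$, the local isomorphisms can be chosen integrally at almost all places; as written, the gluing claim is the step that would not pass without justification.
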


Hence in the situation of the theorem, every realization of \(K\) as subfield of \(\R\) defines a distinct commensurability class of lattices in \(\mathbf{G}(\R)\).  But all these lattices are profinitely commensurable.  Additional distinct commensurability classes in the same profinite commensurability class arise if \(K\) has a non-isomorphic but locally-isomorphic sibling.  Recall, however, that \(K\) is Galois over \(\Q\) if and only if every embedding of \(K\) into \(\R\) has the same image.  Moreover, Galois extensions have no locally isomorphic siblings~\cite{Klingen:similarities}*{Theorem~III.1.4.(b)}.  We conclude a neat description of the \emph{profinitely solitary} lattices in \(\mathbf{G}(\R)\) all of whose profinitely commensurable lattices in \(\mathbf{G}(\R)\) are actually commensurable.

\begin{corollary} \label{cor:solitary-galois}
  Let \(\mathbf{G}\) be as in Theorem~\ref{thm:exceptional-cases} and \(K \subset \R\) be totally real.  Then \(\Gamma_K \subset \mathbf{G}(\R)\) is profinitely solitary if and only if \(K / \Q\) is Galois.
\end{corollary}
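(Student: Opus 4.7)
The plan is to translate the statement entirely into number theory via Theorem~\ref{thm:exceptional-cases}. Part~(i) of that theorem identifies commensurability classes of lattices in $\mathbf{G}(\R)$ with totally real number subfields of $\R$, while part~(ii) identifies profinite commensurability classes with local isomorphism classes of abstract totally real number fields. Writing $\mathcal{K}$ for the local isomorphism class of $K$, the commensurability classes sitting inside the profinite commensurability class of $\Gamma_K$ therefore correspond precisely to the totally real subfields $K' \subset \R$ whose abstract isomorphism type lies in $\mathcal{K}$. Hence $\Gamma_K$ is profinitely solitary if and only if $K$ is the unique such $K'$.

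For the "only if" direction, assume $K/\Q$ is not Galois. Then $K$ admits an embedding $\sigma \colon K \hookrightarrow \R$ with $\sigma(K) \neq K$. The subfield $\sigma(K) \subset \R$ is abstractly isomorphic to $K$, hence lies in $\mathcal{K}$, and is still totally real since $K$ is. Under the bijection in (i) it produces a commensurability class $\Gamma_{\sigma(K)} \neq \Gamma_K$, and by (ii) these two classes share the same profinite commensurability class, so $\Gamma_K$ is not profinitely solitary.

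For the converse, assume $K/\Q$ is Galois and let $K' \subset \R$ be any totally real subfield in $\mathcal{K}$. By Klingen's theorem~\cite{Klingen:similarities}*{Theorem~III.1.4.(b)}, a Galois extension of $\Q$ has no non-isomorphic locally isomorphic siblings, so $K'$ is abstractly isomorphic to $K$. Since $K/\Q$ is Galois, every embedding $K \hookrightarrow \C$ has the same image, which must equal $K$. Hence $K' = K$ as subfields of $\R$, and $\Gamma_K$ is profinitely solitary.

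Both ingredients --- the coincidence of images under all embeddings of a Galois field, and the absence of locally isomorphic siblings of Galois extensions --- are already recalled in the paragraph preceding the corollary. I therefore expect no genuine obstacle: the proof is simply a careful bookkeeping with the two bijections of Theorem~\ref{thm:exceptional-cases}, together with these two classical facts.
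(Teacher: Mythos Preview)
Your proposal is correct and is exactly the argument the paper intends: the corollary has no separate proof in the paper, it is deduced directly from Theorem~\ref{thm:exceptional-cases} together with the two facts (unique image of embeddings for Galois extensions, and absence of locally isomorphic siblings) stated in the paragraph immediately preceding it. Your write-up simply makes the bookkeeping explicit.
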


Note however that the profinite solitude of \(\Gamma_K \subset \mathbf{G}(\mathbb{K})\) for \(K/\Q\) Galois does not extend beyond lattices in the fixed group \(\mathbf{G}(\R)\).  In fact, our proof shows that for every totally real \(K \subset \R\), Galois or not, the corresponding lattices in \(E_{8(8)}\) and \(E_{8(-24)}\) are profinitely commensurable but they are not abstractly commensurable by superrigidity.

Theorem~\ref{thm:exceptional-cases} as well as Corollary~\ref{cor:solitary-galois} would also hold true for the rank one group \(F_{4(-20)}\) if it was known that it had CSP.  This is of interest in view of a question asked by A.\,Reid in~\cite{Reid:profinite}*{Question~10}:

\smallskip
\emph{``Are lattices in rank one semisimple Lie groups profinitely rigid?''}
\smallskip

In other words, does such a lattice \(\Gamma\) have the property that for every finitely generated residually finite group \(\Lambda\), the assumption \(\widehat{\Gamma} \cong \widehat{\Lambda}\) implies \(\Gamma \cong \Lambda\)? In~\cite{Stover:lattices-pu}, M.\,Stover answered this question in the negative by constructing counterexamples in the Lie groups \(\mathrm{PU}(n,1)\) for all \(n \ge 2\).   These Lie groups do not have CSP.  But Reid already speculated himself that the answer should be ``no'' for rank one groups with CSP, too, and indeed, if \(F_{4(-20)}\) has CSP, then Corollary~\ref{cor:solitary-galois} shows that the answer is a resounding no: every totally real non-Galois extension of \(\Q\) would yield a counterexample.  Similarly, the construction in \cite{Kammeyer-Kionke:rigidity}*{Section~3.5} would give counterexamples in the rank one Lie group \(\mathrm{Sp}(n,1)\) if it has CSP.  Contrary to Serre's conjecture (uttered in the 70s), it has meanwhile been suspected that \(\mathrm{Sp}(n,1)\) and \(F_{4(-20)}\) might have CSP because their lattices exhibit other higher rank properties: arithmeticity, superrigidity, and property~(T)~\cite{Lubotzky:non-arithmetic-rigid}*{Section~4}.

For other types of groups \(\mathbf{G}\), lattices will generally not be profinitely solitary even if their number field is a Galois extension of \(\Q\).  However, for $K = \bbQ$ we have the following result.

\begin{theorem} \label{thm:rational-field-solitary}
Suppose \(\mathbb{K} = \R\) and \(\mathbf{G}\) is not of type \(A_n\), \(D_n\), or \(E_6\).  Then for every \(\mathbf{G}\)-arithmetic pair \((\Q, \mathbf{H})\), the lattice \(\Gamma_{(\Q, \mathbf{H})} \subset \mathbf{G}(\R)\) is profinitely solitary. In particular, every non-cocompact lattice in \(\mathbf{G}(\R)\) is profinitely solitary.
\end{theorem}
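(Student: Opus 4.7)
The plan is to combine Theorem~\ref{thm:profinite-commensurability-classification} with a classical Hasse principle for isomorphism classes of semisimple $\Q$-groups: it suffices to show that the local isomorphism class of any $\mathbf{G}$-arithmetic pair $(\Q,\mathbf{H})$ consists of $(\Q,\mathbf{H})$ alone. So let $(K,\mathbf{H}')$ be a pair locally isomorphic to $(\Q,\mathbf{H})$; I need to show that $K=\Q$ and $\mathbf{H}'\cong\mathbf{H}$ over $\Q$.

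That $K = \Q$ is immediate from the definition of local isomorphism: it provides an abstract ring isomorphism $\A_K^f \cong \A_\Q^f$, and the degree is one of the standard invariants of a number field recoverable from its finite adele ring (as an invariant of arithmetic equivalence, cf.~\cite{Klingen:similarities}), forcing $[K:\Q]=1$. The main content lies in the second step. Both $\mathbf{H}$ and $\mathbf{H}'$ are simply connected $\Q$-forms of $\mathbf{G}$ that become isomorphic over $\Q_p$ for every prime $p$ (by the local isomorphism on finite adeles) and over $\R$ (since both are simply connected and $\R$-isogenous to $\mathbf{G}$, hence $\R$-isomorphic to its simply connected cover). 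Because $\mathbf{G}$ is of type $B$, $C$, $E_7$, $E_8$, $F_4$, or $G_2$, the outer automorphism group of $\mathbf{G}$ is trivial, so all $\Q$-forms of $\mathbf{G}$ are inner twists, classified by $H^1(\Q,\mathbf{H}^{ad})$. The Hasse principle needed --- triviality of the kernel of the localization map $H^1(\Q,\mathbf{H}^{ad})\to \prod_v H^1(\Q_v,\mathbf{H}^{ad})$ --- holds in each of these cases: for $E_8$, $F_4$, $G_2$ one has $\mathbf{H}^{ad}=\mathbf{H}^{sc}$ (trivial center) and the Kneser--Harder--Chernousov theorem applies to the simply connected group; for $B_n$, $C_n$ the forms correspond to quadratic or involutory data on central simple algebras and the principle reduces to Hasse--Minkowski; for $E_7$ it is due to Harder. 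Thus $\mathbf{H}\cong\mathbf{H}'$ over $\Q$, so the pairs coincide.

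The ``in particular'' statement reduces to the first part: by Margulis arithmeticity every lattice in $\mathbf{G}(\R)$ is commensurable to some $\Gamma_{(K,\mathbf{H})}$, and Godement's compactness criterion forces $\Gamma_{(K,\mathbf{H})}$ to be cocompact whenever $K\neq\Q$ --- at any additional infinite place $v$ of $K$ the group $\mathbf{H}$ is $K_v$-anisotropic, so $\mathrm{rank}_K\mathbf{H}\le\mathrm{rank}_{K_v}\mathbf{H}=0$. The main obstacle in the proof is the case-by-case verification of the Hasse principle in the second step; its genuine failure for types $A_n$, $D_n$, and $E_6$ --- through outer automorphisms, triality for $D_4$, and Tate--Shafarevich obstructions, respectively --- is exactly the reason those types must be excluded from the statement.
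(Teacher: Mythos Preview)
Your proof is correct and follows essentially the same route as the paper: reduce to showing that the local isomorphism class of $(\Q,\mathbf{H})$ is a singleton, use that $\Q$ is determined by its adele ring, and then invoke a Hasse principle. Two minor points of comparison: the paper cites the uniform Hasse principle for \emph{adjoint} groups \cite{Platonov-Rapinchuk:algebraic-groups}*{Theorem~6.22} rather than arguing case by case, and it reaches local isomorphism via Theorem~\ref{thm:profinite-isomorphism-adelic} directly rather than through Theorem~\ref{thm:profinite-commensurability-classification}, so CSP is never invoked (though it holds for the types at hand anyway). Your closing remark is slightly off: the Hasse principle for $H^1(\Q,\overline{\mathbf{H}})$ in fact holds for \emph{all} types, and the obstruction for $A_n$, $D_n$, $E_6$ is purely that outer automorphisms prevent the identification $\Aut_{\overline{\Q}}(\mathbf{H})\cong\overline{\mathbf{H}}$, not a failure of Hasse for the adjoint group itself.
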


For groups of type \(A_n\), \(D_n\), and \(E_6\) the situation is subtle and seems to require a more thorough inspection. For instance, profinitely isomorphic, non-commensurable, and non-cocompact lattices in $\SL_n(\bbR)$, $\SL_n(\bbC)$ and $\SL_n(\mathbb{H})$ where $n \geq 6$ is a composite number were constructed in \cite[Proposition 3.1]{Kammeyer-Kionke:rigidity}. 

We saw in \cite{Kammeyer-Kionke:rigidity} that for \(\mathbb{K} = \C\), all lattices in \(\mathbf{G}(\C)\), cocompact or not, are profinitely solitary if the complex group \(\mathbf{G}\) has type \(E_8\), \(F_4\), or \(G_2\).  In contrast, if the complex group \(\mathbf{G}\) has any other type except possibly \(E_6\) and \(A_n\) where CSP is open, then a non-cocompact lattice \(\Gamma \subset \mathbf{G}(\C)\) is not necessarily profinitely solitary as we shall explain in Remark~\ref{remark:complex-case}.

\smallskip

The main tool underlying our results is an \emph{adelic superrigidity theorem}; see Theorem \ref{thm:adelic-superrigidity}.  The result relies only on Margulis superrigidity and has a noteworthy consequence indicating a relation to the congruence subgroup property; see Theorem \ref{thm:profinite-isomorphism-adelic}. 
\begin{corollary}
If two superrigid lattices have isomorphic profinite completions, then there are subgroups of finite index with isomorphic pro-congruence completions. 
\end{corollary}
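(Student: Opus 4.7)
The plan is to combine Margulis' arithmeticity and superrigidity with the adelic superrigidity statement (Theorem \ref{thm:profinite-isomorphism-adelic}) and strong approximation. Write \(\Gamma_1\) and \(\Gamma_2\) for the two superrigid lattices in ambient Lie groups \(\mathbf{G}_1(\K)\) and \(\mathbf{G}_2(\K)\). By arithmeticity, each \(\Gamma_i\) is commensurable with the arithmetic lattice arising from a \(\mathbf{G}_i\)-arithmetic pair \((K_i, \mathbf{H}_i)\) in the sense of Definition~\ref{def:g-arithmetic-pair}. After replacing \(\Gamma_i\) with a finite-index subgroup we may arrange that each \(\Gamma_i\) is a principal congruence subgroup of \(\mathbf{H}_i(\calO_{K_i})\) for a chosen integral model.

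Next I would feed the assumed isomorphism \(\widehat{\Gamma}_1 \cong \widehat{\Gamma}_2\) into Theorem~\ref{thm:profinite-isomorphism-adelic}. This should produce an isomorphism of finite adele rings \(\varphi \colon \bbA_{K_1}^f \xrightarrow{\sim} \bbA_{K_2}^f\) together with an isomorphism of the base-changed algebraic groups \(\mathbf{H}_1 \times_{K_1} \bbA_{K_1}^f \xrightarrow{\sim} \mathbf{H}_2 \times_{K_2} \bbA_{K_2}^f\) covering \(\varphi\). On the level of adelic points this induces a topological group isomorphism \(\Phi \colon \mathbf{H}_1(\bbA_{K_1}^f) \xrightarrow{\sim} \mathbf{H}_2(\bbA_{K_2}^f)\).

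Since the \(\mathbf{H}_i\) are simply connected and absolutely almost simple with an isotropic infinite place, strong approximation identifies the closure of \(\Gamma_i\) in \(\mathbf{H}_i(\bbA_{K_i}^f)\) with a compact open subgroup \(U_i\) whose topological group structure coincides with the pro-congruence completion of \(\Gamma_i\). The intersection \(\Phi(U_1) \cap U_2\) is then a compact open subgroup of \(\mathbf{H}_2(\bbA_{K_2}^f)\) and, via strong approximation, it is the pro-congruence completion of a finite-index subgroup \(\Gamma_2' \subset \Gamma_2\); its preimage under \(\Phi\) cuts out a matching finite-index subgroup \(\Gamma_1' \subset \Gamma_1\). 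By construction, the pro-congruence completion of each \(\Gamma_i'\) is topologically isomorphic to \(\Phi(U_1) \cap U_2\), so the two pro-congruence completions are isomorphic, as required.

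I expect the main obstacle to be purely bookkeeping: carefully unwinding how the adelic isomorphism supplied by Theorem~\ref{thm:profinite-isomorphism-adelic} interacts with the embeddings \(\Gamma_i \hookrightarrow \mathbf{H}_i(\bbA_{K_i}^f)\) coming from strong approximation, so that \(\Phi\) really identifies the pro-congruence completions of sufficiently small finite-index subgroups rather than only abstractly isomorphic compact open subgroups sitting somewhere in the adelic group. Any remaining mismatch can be absorbed by shrinking to a common smaller compact open subgroup and passing to correspondingly deeper principal congruence subgroups on both sides.
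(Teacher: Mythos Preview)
Your approach is essentially the paper's: the corollary is an immediate consequence of Theorem~\ref{thm:profinite-isomorphism-adelic}, and you correctly identify that theorem as the engine. Two remarks, however.

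First, there is a small order-of-operations slip. You replace each \(\Gamma_i\) independently by a principal congruence subgroup \emph{before} invoking the isomorphism \(\widehat{\Gamma}_1 \cong \widehat{\Gamma}_2\). But that isomorphism was given for the original lattices, and there is no reason the two independently chosen finite-index subgroups should still have isomorphic profinite completions. The fix is standard: first choose an arithmetic finite-index subgroup \(\Lambda_1 \le \Gamma_1\), take its closure in \(\widehat{\Gamma}_1\), transport this open subgroup via \(\Psi\) to \(\widehat{\Gamma}_2\), and intersect with an arithmetic finite-index subgroup of \(\Gamma_2\); iterating once more if needed yields arithmetic \(\Lambda_i \le \Gamma_i\) with \(\widehat{\Lambda}_1 \cong \widehat{\Lambda}_2\), and now Theorem~\ref{thm:profinite-isomorphism-adelic} applies.

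Second, your final step---intersecting \(\Phi(U_1)\cap U_2\) and pulling back---is correct but more work than needed. The ``in particular'' clause of Theorem~\ref{thm:profinite-isomorphism-adelic} already hands you the open normal subgroup \(U\le \widehat{\Gamma}\) on which the square with \(q_{\mathbf{G}}\), \(q_{\mathbf{H}}\), \(\Psi\), and \(\eta_1\) commutes; the finite-index subgroups \(\Gamma' = \Gamma\cap U\) and \(\Delta' = \Delta\cap\Psi(U)\) then have pro-congruence completions \(q_{\mathbf{G}}(U)\) and \(q_{\mathbf{H}}(\Psi(U))\), which \(\eta_1\) identifies directly. So the ``main obstacle'' you anticipate is already absorbed into the statement of Theorem~\ref{thm:profinite-isomorphism-adelic} via the kernel \(U = \ker(\hat{\nu}_1)\cap\ker(\hat{\nu}_2\circ\Psi)\), and no further intersection argument is required.
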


\subsection*{Structure of the article.}  In Section~\ref{sec:tools}, we provide preliminary material from algebraic geometry.  Section~\ref{sec:adelic-superrigidity} is devoted to the statement and proof of the adelic superrigidty theorem.  In Section~\ref{sec:classification}, we deduce the profinite commensurability classification of lattices and Section~\ref{sec:profinite-rigidity} concludes with the applications on profinite rigidity as outlined in this introduction.

\subsection*{Acknowledgements}  We gratefully acknowledge financial support by the German Research Foundation (DFG) via the Priority Program ``Geometry at Infinity'', DFG 338540207 and DFG 441848266, and via the Reseach Training Group ``Asymptotic Invariants and Limits of Groups and Spaces'', DFG 281869850.

\section{Tools from algebraic geometry} \label{sec:tools}
 
Our first goal is to establish the adelic superrigidity theorem (Theorem~\ref{thm:adelic-superrigidity}) which forms the technical core result of our investigation.  To this end we shall need some tools from algebraic geometry which we supply in this section.  The following convenient lemma is proven in~\cite{Borel-Tits:homomorphismes}*{1.4}.

\begin{lemma} \label{lemma:defined-over-k}
  Let \(K / k\) be a field extension with \(K\) algebraically closed.  Let \(f \colon X \rightarrow Y\) be a \(K\)-morphism of \(k\)-varieties such that \(f(D) \subset Y(k)\) for some Zariski dense subset \(D \subset X(k)\).  Then \(f\) is defined over \(k\).
\end{lemma}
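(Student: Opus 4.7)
The plan is to transfer the statement to the graph of $f$ and then reduce it to a linear algebra computation inside $K$ viewed as a $k$-vector space. Consider the graph $\Gamma_f \subset X \times Y$. The first projection $\pi_X \colon \Gamma_f \to X$ is an isomorphism of $K$-varieties, and under it the subset $\Gamma_D := \{(d, f(d)) : d \in D\}$ maps onto $D$. Since $D$ is Zariski dense in $X$, $\Gamma_D$ is Zariski dense in $\Gamma_f$; by hypothesis $\Gamma_D \subset (X \times Y)(k)$. It therefore suffices to prove the general fact that the Zariski closure, inside any $k$-variety $Z$, of a subset $S \subset Z(k)$ is a $k$-subvariety. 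Applied to $Z = X \times Y$ and $S = \Gamma_D$ this shows that $\Gamma_f$ is a $k$-subvariety, which by faithfully flat descent on the isomorphism $\pi_X \colon \Gamma_f \to X$ is equivalent to $f$ being defined over $k$.

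To prove the general fact, I would assume $Z = \mathrm{Spec}(C)$ is $k$-affine with $C = k[Z]$; the general case follows by passing to a $k$-affine open cover of $Z$ and using that closure commutes with intersection by open subsets. Each $s \in S$ then corresponds to a $k$-algebra homomorphism $\mathrm{ev}_s \colon C \to k$ whose $K$-linear extension has kernel $\ker(\mathrm{ev}_s) \otimes_k K$. The ideals of the Zariski closures of $S$ in $Z_K$ and in $Z$ are respectively
\[
J \;=\; \bigcap_{s \in S} \bigl(\ker(\mathrm{ev}_s) \otimes_k K\bigr) \qquad \text{and} \qquad I \;=\; \bigcap_{s \in S} \ker(\mathrm{ev}_s),
\]
and the entire content reduces to proving the identity $J = I \otimes_k K$.

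The inclusion $I \otimes_k K \subset J$ is immediate. For the reverse, I would fix a $k$-basis $\{\lambda_\alpha\}$ of $K$ and expand any $x \in J$ as $x = \sum_\alpha c_\alpha \otimes \lambda_\alpha$ with $c_\alpha \in C$. Applying the $K$-linear extension of each $\mathrm{ev}_s$ gives $\sum_\alpha \mathrm{ev}_s(c_\alpha) \lambda_\alpha = 0$ in $K$, and $k$-linear independence of the $\lambda_\alpha$ forces $\mathrm{ev}_s(c_\alpha) = 0$ for every $s \in S$ and every $\alpha$. Thus $c_\alpha \in I$ for all $\alpha$, so $x \in I \otimes_k K$, completing the identification.

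The linear-independence step is the heart of the argument and is immediate once the ideals have been set up. The only real bookkeeping obstacle lies in the initial reductions: one must verify that $f$ being defined over $k$ is equivalent to its graph being a $k$-subvariety (this is faithfully flat descent applied to the $k$-morphism $\pi_X$ of the graph, which is a $K$-isomorphism and hence a $k$-isomorphism), and one must check that the closure-of-$k$-points statement is local enough to pass from a $k$-affine cover to a general $k$-variety.
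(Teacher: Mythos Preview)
The paper does not supply its own proof of this lemma; it simply cites \cite{Borel-Tits:homomorphismes}*{1.4}. Your argument is correct, and its core computation---the basis expansion showing $J = I \otimes_k K$---is exactly the device the paper employs to prove the refinement in Lemma~\ref{lemma:defined-over-subalgebra}.

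Your route through the graph is valid but more elaborate than needed. The direct approach (which is also how the paper handles Lemma~\ref{lemma:defined-over-subalgebra}) works on the comorphism: in the affine case one shows $f^*(k[Y]) \subset k[X]$ by expanding $f^*(t) = \sum_\alpha c_\alpha \otimes \lambda_\alpha$ for $t \in k[Y]$ in a $k$-basis $\{\lambda_\alpha\}$ of $K$, evaluating at each $d \in D$ to obtain $\sum_\alpha c_\alpha(d)\lambda_\alpha = t(f(d)) \in k$, and concluding that all $c_\alpha$ except the coefficient of $1$ vanish on $D$ and hence identically. This is your $J = I \otimes_k K$ computation applied one function at a time, and it bypasses both the graph construction and the faithfully-flat-descent step for $\pi_X$. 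What your graph formulation does buy is a slightly cleaner reduction to the affine case: the statement ``the Zariski closure of a set of $k$-points is defined over $k$'' is manifestly local on the ambient $k$-variety $X \times Y$, whereas the comorphism approach requires a little more care matching $k$-affine covers of $X$ and $Y$ against the $K$-morphism $f$.
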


Here is a refined version for \(k\)-algebras instead of fields.

\begin{lemma} \label{lemma:defined-over-subalgebra}
  Let $k$ be a field, let $E\subseteq B$ be $k$-algebras. Let $\mathfrak{X}$ be an affine scheme over $k$ and let $\mathfrak{Y}$ be an affine scheme over $E$.
  Let $f\colon \mathfrak{X} \times_k B \to \mathfrak{Y} \times_E B$ be a morphism of schemes over $B$.
  Assume there is a Zariski dense subset $D \subseteq \mathfrak{X}(k)$
  such that $f(D) \subseteq \mathfrak{Y}(E)$. Then $f$ is defined over $E$.
\end{lemma}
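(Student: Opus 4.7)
The plan is to pass to coordinate rings and reduce the claim to the statement that certain regular functions on $\mathfrak{X}$ which vanish on $D$ must vanish identically. Let $A=\mathcal{O}(\mathfrak{X})$ and $C=\mathcal{O}(\mathfrak{Y})$, so that $f$ corresponds to a $B$-algebra homomorphism $\varphi\colon C\otimes_E B\to A\otimes_k B$. Showing that $f$ is defined over $E$ is equivalent to showing $\varphi(C)\subseteq A\otimes_k E$, because such an inclusion yields an $E$-algebra map $C\to A\otimes_k E=\mathcal{O}(\mathfrak{X}\times_k E)$ whose base change back to $B$ recovers $\varphi$.

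Next, since $k$ is a field, I would fix a $k$-basis $\{e_i\}_{i\in I}$ of $B$ that extends a $k$-basis $\{e_i\}_{i\in I_0}$ of the subspace $E\subseteq B$. Then $A\otimes_k B=\bigoplus_{i\in I}A\otimes_k ke_i$, and an element $\sum_i a_i\otimes e_i$ of $A\otimes_k B$ lies in $A\otimes_k E$ if and only if $a_i=0$ for every $i\notin I_0$. Given $c\in C$ with $\varphi(c)=\sum_i a_i\otimes e_i$, the task therefore reduces to showing that $a_i=0$ for each $i\notin I_0$.

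To exploit the hypothesis, for each $x\in D\subseteq\mathfrak{X}(k)$ let $\chi_x\colon A\to k$ be the associated $k$-algebra homomorphism and consider its $B$-linear extension $\chi_x\otimes\mathrm{id}_B\colon A\otimes_k B\to B$. The $B$-point $f(x)\in\mathfrak{Y}(B)$ is precisely $(\chi_x\otimes\mathrm{id}_B)\circ\varphi\colon C\otimes_E B\to B$, so $f(x)\in\mathfrak{Y}(E)$ translates into $(\chi_x\otimes\mathrm{id}_B)(\varphi(c))\in E$ for every $c\in C$. Substituting $\varphi(c)=\sum_i a_i\otimes e_i$ yields $\sum_i\chi_x(a_i)e_i\in E$, which by the choice of basis forces $\chi_x(a_i)=0$ for every $i\notin I_0$ and every $x\in D$. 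Each such $a_i$ is therefore a regular function on $\mathfrak{X}$ vanishing on the Zariski dense subset $D$, so $a_i=0$ as required. The only real subtlety is this last step, where \emph{Zariski dense} should be read scheme-theoretically as $\bigcap_{x\in D}\ker\chi_x=0$; this coincides with topological density whenever $\mathfrak{X}$ is reduced, which is the case for all applications in this paper.
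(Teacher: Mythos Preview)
Your proof is correct and follows essentially the same route as the paper's: both pass to coordinate rings, expand elements of $A\otimes_k B$ in a $k$-basis of $B$ extending one of $E$, evaluate at points of $D$ to force the coefficients outside $E$ to vanish on $D$, and then invoke Zariski density. Your closing remark on the scheme-theoretic reading of ``Zariski dense'' is a helpful clarification that the paper leaves implicit.
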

\begin{proof}
  Let $k[\mathfrak{X}]$ denote the coordinate ring of $\mathfrak{X}$. Similarly, we write
  $B[\mathfrak{X}] = B \otimes_k k[\mathfrak{X}]$ and
  $E[\mathfrak{X}] = E \otimes_k k[\mathfrak{X}]$. We note that the canonical map
  $E[\mathfrak{X}] \to B[\mathfrak{X}]$ is injective.

  Let $\gamma \in B[\mathfrak{X}]$. We
  claim that if $\gamma(\delta) \in E$ for all $\delta \in D$, then
  $\gamma \in E[\mathfrak{X}]$. To see this we take a $k$-basis $\{e_i\}_{i\in I}$ of $E$ and a basis $\{b_j\}_{j \in J}$ of some complement of $E$ in $B$. We may write
  \[ \gamma = \sum_{i\in I} e_i\otimes t_i + \sum_{j\in J} b_j \otimes s_j\]
  for certain (uniquely determined) elements $s_j, t_i \in k[\mathfrak{X}]$.
  If we apply this element $\gamma$ of the coordinate ring to $\delta \in D$, we obtain
  \[ E \ni \gamma(\delta) =  \sum_{i\in I} e_it_i(\delta) + \sum_{j\in J} b_j s_j(\delta).\]
  Since $t_i(\delta)$ and $s_j(\delta)$ lie in $k$, we deduce that the second term vanishes, i.e., $s_j(\delta) = 0$ for all $\delta \in D$ and all $j \in J$.
  Since $D \subseteq \mathfrak{X}$ is Zariski dense, we deduce that $s_j = 0$ for all $j \in J$.

  Finally, we can prove the lemma. Let $f^* \colon B[\mathfrak{Y}] \to B[\mathfrak{X}]$ be the dual morphism. We have to show that $f^*(E[\mathfrak{Y}]) \subseteq E[\mathfrak{X}]$. Let $t\in E[\mathfrak{Y}]$. By assumption we have
  $f^*(t)(\delta) = t(f(\delta)) \in E$ for all $\delta \in D$, hence the assertion follows from the observation above.
\end{proof}

\begin{lemma} \label{lemma:zariski-dense}
  Let $k$ be a local field with $\mathrm{char}(k) = 0$ and let $\mathfrak{X}$ be a smooth irreducible affine variety over $k$.
  \begin{enumerate}
  \item\label{it:open-dense} If the closure of $U \subseteq \mathfrak{X}(k)$has non-empty interior in the Euclidean topology, then
  $U$ is Zariski dense.
  \item\label{it:dimension-not-open} If $L_1,\dots,L_r$ are finite extension fields of $k$ with $\sum_{i=1}^r[L_i:k]\geq 2$, then the image of the diagonal map
  \[
  	\mathfrak{X}(k) \to \mathfrak{X}(L_1) \times \cdots \times \mathfrak{X}(L_r)
  \]
  is closed with empty interior.
  \end{enumerate}
\end{lemma}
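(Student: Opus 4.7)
For part (\ref{it:open-dense}), my plan is to translate the Euclidean hypothesis into a Zariski statement. Let $Z$ denote the Zariski closure of $U$ in $\mathfrak{X}$; since Zariski closed sets are Euclidean closed, $Z(k)$ contains the Euclidean closure of $U$ and therefore has non-empty Euclidean interior. It then suffices to show $Z = \mathfrak{X}$. I would pick a point $c$ in this interior and invoke the implicit function theorem, valid for smooth varieties over the characteristic zero local field $k$, to obtain a $k$-analytic chart $\phi\colon W \to B$ identifying a Euclidean open neighborhood $W \subseteq Z(k)$ of $c$ with an open ball $B \subseteq k^n$, where $n = \dim \mathfrak{X}$.

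The key step is then the following. For any regular function $f \in k[\mathfrak{X}]$ vanishing on $Z$, the pullback $f \circ \phi^{-1}$ is a $k$-analytic function on $B$ that vanishes identically, so its Taylor series at $\phi(c)$ is the zero power series. Equivalently, $f$ has trivial image in the completed local ring $\widehat{\mathcal{O}}_{\mathfrak{X},c} \cong k[[y_1,\ldots,y_n]]$. Krull's intersection theorem ensures that $\mathcal{O}_{\mathfrak{X},c}\hookrightarrow \widehat{\mathcal{O}}_{\mathfrak{X},c}$ is injective, so $f$ already vanishes in the stalk. Irreducibility of $\mathfrak{X}$ makes $k[\mathfrak{X}]$ a domain and renders the localization $k[\mathfrak{X}]\hookrightarrow \mathcal{O}_{\mathfrak{X},c}$ injective as well, forcing $f = 0$. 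Hence the vanishing ideal of $Z$ in $k[\mathfrak{X}]$ is trivial and $Z = \mathfrak{X}$.

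For part (\ref{it:dimension-not-open}), I would treat closedness and empty interior separately. For closedness, after fixing an affine embedding $\mathfrak{X} \hookrightarrow \mathbb{A}^m_k$, each inclusion $\mathfrak{X}(k) \hookrightarrow \mathfrak{X}(L_i)$ is a closed topological embedding because $k^m \subseteq L_i^m$ is a closed $k$-linear subspace of a finite-dimensional $k$-vector space; given a convergent sequence $\Delta(x_n) \to (y_1, \ldots, y_r)$ in the product, each projection forces $y_i \in \mathfrak{X}(k)$, and uniqueness of limits for $(x_n)$ in the common subspace topology on $\mathfrak{X}(k)$ yields $y_1 = \cdots = y_r$, so the limit lies in the diagonal. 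For the empty interior I would use a dimension count: each $\mathfrak{X}(L_i)$ carries a natural structure of $k$-analytic manifold of dimension $[L_i:k]\cdot\dim\mathfrak{X}$, so the product has $k$-analytic dimension $\sum_i[L_i:k]\cdot\dim\mathfrak{X}$, which strictly exceeds $\dim\mathfrak{X}$ under the hypothesis $\sum_i[L_i:k]\ge 2$ (assuming $\dim\mathfrak{X}\ge 1$, the only interesting case). The submanifold chart theorem then locally presents the image inside a proper $k$-linear subspace of the ambient coordinates, which has empty interior by the same Taylor series vanishing used in (\ref{it:open-dense}).

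The hard part, as I see it, lies in justifying the local-to-global passage of part (\ref{it:open-dense}) rigorously when $k$ is $p$-adic. The naive identity principle for analytic functions is unavailable there because the local field is totally disconnected, so vanishing of $f$ on a single ball cannot be propagated across components. The resolution is to stay local and replace analytic continuation with the algebraic rigidity afforded by Krull's intersection theorem in the completed local ring, which upgrades the vanishing of a single Taylor series to a genuine global conclusion.
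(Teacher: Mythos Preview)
Your argument is correct, and for part~\eqref{it:dimension-not-open} it coincides with the paper's: both reduce to a dimension count on $k$-analytic manifolds, with your treatment of closedness simply being more explicit than the paper's one-line appeal to the diagonal being a closed immersion.

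For part~\eqref{it:open-dense}, however, you take a genuinely different route. The paper argues by noetherian descent: it picks a \emph{minimal} Zariski closed $Z \subseteq \mathfrak{X}(\overline{k})$ whose $k$-points contain a Euclidean open set, uses minimality to force $Z$ irreducible, then passes to the smooth locus of $Z$ and compares analytic dimensions of the submanifold $Z(k)$ inside $\mathfrak{X}(k)$ to conclude $\dim Z = \dim \mathfrak{X}$. Your approach instead goes straight for the vanishing ideal: you show that any regular function vanishing on a Euclidean open set of $\mathfrak{X}(k)$ has zero formal Taylor expansion at an interior point, and then Krull's intersection theorem plus integrality of $k[\mathfrak{X}]$ kill it globally. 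This is cleaner in that it never touches the singular locus of $Z$ or invokes a minimality argument; the price is that you must justify the identification of analytic and formal Taylor coefficients at a smooth $k$-point, which is standard but worth a sentence. The paper's approach, by contrast, is more geometric and its minimality device adapts more readily to situations where smoothness of the ambient variety is not available everywhere. Your closing remark correctly identifies the real issue over non-archimedean $k$: the identity principle fails, and your use of the completed local ring is exactly the right substitute.
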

\begin{proof}
\eqref{it:open-dense}:
	Since the Zariski topology is noetherian, there is a minimal Zariski closed subset $Z \subseteq \mathfrak{X}(\overline{k})$ such that
	$Z \cap \mathfrak{X}(k)$ contains a non-empty Hausdorff open set. We claim that $Z$ is $\mathfrak{X}$.
	Finite unions of nowhere dense sets are nowhere dense, thus $Z$ is irreducible by minimality. The singular locus of $Z$ has properly smaller dimension and hence
	cannot contain a euclidean open set by minimality of $Z$. On the complement, however, $Z(k)$ locally is a submanifold and can only contain
	a Hausdorff open set, if $\dim_k(Z) = \dim_k(\mathfrak{X})$. 
	  
  \eqref{it:dimension-not-open}:
  Since $\mathfrak{X}$ is smooth, $\mathfrak{X}(k)$ and  $\mathfrak{X}(L_1) \times \cdots \times \mathfrak{X}(L_r)$ are $k$-analytic manifolds of dimension $\dim_k(\mathfrak{X})$ and $\dim_k(\mathfrak{X})\sum_{i=1}^r[L_i:k]$ respectively\footnote{One way to see this is to use ``equality'' of analytic and algebraic tangent spaces.}. 
  The diagonal map is a closed immersion and so,  by assumption, the image is a submanifold of properly smaller dimension; in particular it has empty interior.\end{proof}

Let $\alg{G}$ and $\alg{H}$ be affine group schemes over some field $k$.
Define $\bHom(\alg{G},\alg{H})$ to be the functor which assigns to every commutative $k$-algebra $E$
the set of homomorphisms of group schemes $\alg{G}\times_k E \to \alg{H}\times_k E$. In general this functor need not be representable, but in some situations it is.
\begin{lemma} \label{lemma:lie-algebra-group-transfer}
  Let $k$ be a field of characteristic zero. Let $\alg{G}$ and $\alg{H}$ be linear algebraic groups defined over $k$ and assume that $\alg{G}$ is semisimple. Then the following assertions hold.
  \begin{enumerate}
  \item\label{it:hom-is-variety} The functor $\bHom(\alg{G},\alg{H})$ is an affine scheme defined over $k$. If $\alg{G}$ is simply connected, then
    taking the tangent map yields an isomorphism
    $\bHom(\alg{G},\alg{H})(k) \cong \Hom_{\text{Lie}}(\fg,\fh)$,
    where $\fg$ and $\fh$ denote the $k$-Lie algebras of  $\alg{G}$ and $\alg{H}$ respectively.

  \item\label{it:ev-algebraic} The evaluation map $\mathrm{ev} \colon \bHom(\alg{G},\alg{H}) \times \alg{G} \to \alg{H}$
    is algebraic.
    \end{enumerate}
    \end{lemma}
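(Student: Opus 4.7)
The plan is to first handle the case when $\alg{G}$ is simply connected by identifying $\bHom(\alg{G},\alg{H})$ with a variety of Lie algebra homomorphisms, and then to reduce general semisimple $\alg{G}$ to this case via the simply connected isogeny cover. Part~(2) will follow formally once representability is in place.

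Fix $k$-bases of $\fg$ and $\fh$. The space of $k$-linear maps $\fg\to\fh$ is an affine space, and the bracket-preservation condition is cut out by polynomial equations, yielding a closed affine $k$-subscheme $\alg{V}$ whose functor of points satisfies $\alg{V}(E)=\Hom_{E\text{-Lie}}(\fg\otimes_k E,\fh\otimes_k E)$ for every commutative $k$-algebra $E$. Taking the derivative at the identity defines a natural transformation $\tau\colon\bHom(\alg{G},\alg{H})\to\alg{V}$, and I would show that $\tau$ is a natural isomorphism when $\alg{G}$ is simply connected. Injectivity of $\tau(E)$ is standard rigidity: a homomorphism of connected flat group schemes is determined by its tangent map at the identity. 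Surjectivity is the integration step: after a closed embedding $\alg{H}\hookrightarrow\GL(W)$, a Lie homomorphism $\fg_E\to\fh_E$ furnishes an $E$-linear $\fg$-module structure on $W\otimes_k E$, which in characteristic zero integrates to a rational representation $\rho\colon\alg{G}_E\to\GL(W)_E$ via Chevalley's construction of simply connected semisimple group schemes, functorially in the base ring. Comparison of Lie algebras and the closed-subgroup/Lie-subalgebra correspondence then show that $\rho$ factors through $\alg{H}_E$. Specializing $\tau$ to $E=k$ yields the tangent map bijection $\bHom(\alg{G},\alg{H})(k)\cong\Hom_{\textup{Lie}}(\fg,\fh)$.

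For general semisimple $\alg{G}$, let $\pi\colon\tilde{\alg{G}}\to\alg{G}$ be the simply connected central isogeny with finite kernel $\mu$. The assignment $f\mapsto f\circ\pi$ embeds $\bHom(\alg{G},\alg{H})$ into $\bHom(\tilde{\alg{G}},\alg{H})$ as the subfunctor of homomorphisms trivial on $\mu$; finiteness of $\mu$ turns this into a closed condition, so $\bHom(\alg{G},\alg{H})$ is represented by a closed affine $k$-subscheme. For part~(2), once representability is known, the evaluation map corresponds to the universal homomorphism: it is defined on $E$-points by $(f,g)\mapsto f(g)$, which is natural in $E$, and therefore comes from a morphism of $k$-schemes $\bHom(\alg{G},\alg{H})\times\alg{G}\to\alg{H}$.

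The principal obstacle is the relative integration step in the simply connected case: verifying that every $E$-Lie algebra homomorphism $\fg_E\to\fh_E$ lifts uniquely and naturally to a group scheme homomorphism $\alg{G}_E\to\alg{H}_E$ over an arbitrary commutative $k$-algebra $E$. The main technical input is that in characteristic zero, Chevalley's construction of the split simply connected semisimple group scheme and of its highest-weight representations is compatible with arbitrary base change, so that the classical equivalence between algebraic representations of $\alg{G}$ and of $\fg$ persists over any $k$-algebra~$E$. Everything else in the argument is formal manipulation with functors and closed subschemes.
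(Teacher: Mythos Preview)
The paper's proof is two lines: part~\eqref{it:hom-is-variety} is a direct citation of SGA3, Exp.~XXIV, Proposition~7.3.1, and part~\eqref{it:ev-algebraic} follows from Yoneda's lemma, exactly as you argue. Your treatment of~\eqref{it:ev-algebraic} and your reduction of general semisimple $\alg{G}$ to the simply connected cover via the finite central kernel $\mu$ are both fine and match the spirit of the paper.

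The genuine difference is in the simply connected case. Rather than invoking the SGA3 representability theorem, you try to show that the tangent map $\tau\colon\bHom(\alg{G},\alg{H})\to\alg{V}$ into the affine scheme of Lie algebra homomorphisms is an isomorphism of functors. This is a natural strategy in characteristic zero and, if it works, gives more than the lemma asks (an explicit representing scheme). The gap is precisely where you locate it: surjectivity of $\tau(E)$ for an arbitrary $k$-algebra $E$. Your appeal to ``Chevalley's construction of highest-weight representations being compatible with base change'' does not close it. That only tells you that \emph{base-changed} representations integrate; it does not show that an arbitrary $\fg_E$-module structure on $W\otimes_kE$ arises from a $k$-representation, and over a general $E$ there is no decomposition into highest-weight pieces to fall back on. What you actually need is that the differentiation functor from $\alg{G}_E$-representations to $\fg_E$-representations on finite free $E$-modules is an equivalence for every $E$, and that is essentially the characteristic-zero content of the SGA3 proposition the paper quotes.

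There are ways to carry this out directly in characteristic zero --- for instance via the identification of the hyperalgebra of $\alg{G}$ with $U(\fg)$ and a local-finiteness argument, or via deformation theory (the obstruction space $H^2(\fg,\fh)$ vanishes by Whitehead, so $\alg{V}$ is smooth; one then argues that $\tau$ is a formally \'etale monomorphism which is bijective on geometric points). But either route is real work that your proposal does not supply, and once completed it amounts to reproving the special case of the SGA3 result the paper simply cites.
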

    \begin{proof}
    Assertion \eqref{it:hom-is-variety} is a direct consequence of \cite{SGA3-3}*{Proposition 7.3.1,
  Expos\'e XXIV}.  The evaluation $\mathrm{ev}$ is a natural transformation of
  functors, hence by Yoneda's Lemma is algebraic. 
    \end{proof}
   
\begin{lemma} \label{lemma:tuple-yields-morphism}
Let $K$, $L$ be two algebraic number fields.
Let $\alg{G}$ and $\alg{H}$ be linear algebraic groups defined over $K$ and $L$ respectively. Assume further that $\alg{G}$ is connected and semisimple.

Let $S$ be some (possibly infinite) set. Suppose that for each $v \in S$ there is a field $F_v$ with fixed inclusions
$i_v \colon K \to F_v$ and
$j_v \colon L \to F_v$ and a homomorphism $\alpha_v \colon \alg{G} \times_K F_v \to \alg{H}\times_L {F_v}$ of algebraic groups defined over
    $F_v$. 
    
    Then there is a unique morphism $f \colon \alg{G} \times_K B \to \alg{H} \times_L B$ of group schemes defined over $B = \prod_{v \in S} F_v$ such that the following diagram commutes
    \begin{equation*}
      \xymatrix{
        \alg{G}(B) \ar[d]_{\alg{G}(\pr_v)}\ar[r]^{f} & \alg{H}(B)\ar[d]^{\alg{H}(\pr_v)} \\
        \alg{G}(F_v) \ar[r]^{\alpha_v} & \alg{H}(F_v),
        }
      \end{equation*}
      for all $v \in S$ where $\pr_v\colon B \to F_v$ denotes the projection onto the $v$-component.
\end{lemma}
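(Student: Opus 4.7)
The plan is to assemble the local data $(\alpha_v)_{v\in S}$ into a single morphism by exploiting the representability provided by Lemma~\ref{lemma:lie-algebra-group-transfer}\eqref{it:hom-is-variety}, combined with the universal property of products of rings. The only genuine subtlety is that $\alg{G}$ and $\alg{H}$ live over \emph{different} number fields $K$ and $L$, which prevents a direct application of that lemma.

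To circumvent this, the first step is to unify the base by setting $R := K \otimes_{\Q} L$. Since $K$ and $L$ are number fields, $R$ is a finite \'etale $\Q$-algebra and decomposes as a finite product $R = \prod_{i=1}^m M_i$ of number fields. For each $v \in S$ the pair $(i_v, j_v)$ induces a $\Q$-algebra map $R \to F_v$; as $F_v$ is a field, this map factors through exactly one component $M_{i(v)}$. Correspondingly $B = \prod_v F_v$ becomes an $R$-algebra and decomposes as $B \cong \prod_i B_i$, where $B_i = \prod_{v : i(v)=i} F_v$ is naturally an $M_i$-algebra.

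The second step applies Lemma~\ref{lemma:lie-algebra-group-transfer}\eqref{it:hom-is-variety} over each number field $M_i$ to the semisimple group $\alg{G} \times_K M_i$ and to $\alg{H} \times_L M_i$, obtaining an affine $M_i$-scheme $\mathcal{M}_i = \operatorname{Spec} T_i$ that represents the corresponding Hom-functor. For each $v$ with $i(v)=i$, the morphism $\alpha_v$ is an $F_v$-point of $\mathcal{M}_i$ via the identification $\alg{G} \times_K F_v = (\alg{G} \times_K M_i) \times_{M_i} F_v$, and similarly for $\alg{H}$. The affineness of $\mathcal{M}_i$ together with the universal property of products then gives
\[
  \mathcal{M}_i(B_i) = \Hom_{M_i}(T_i, B_i) = \prod_{v : i(v)=i} \Hom_{M_i}(T_i, F_v) = \prod_{v : i(v)=i} \mathcal{M}_i(F_v),
\]
so the family $(\alpha_v)_{v}$ assembles to a unique element of $\prod_i \mathcal{M}_i(B_i)$, that is, to the desired $B$-morphism $f \colon \alg{G} \times_K B \to \alg{H} \times_L B$. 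Commutativity of the required diagram under the projections $\alg{G}(\pr_v)$ and $\alg{H}(\pr_v)$ and uniqueness of $f$ are both built into this identification.

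The main obstacle is essentially conceptual rather than computational: one must recognise that although $\alg{G}$ and $\alg{H}$ are defined over different number fields, the combined data naturally sits over the \'etale $\Q$-algebra $K \otimes_\Q L$, after which the conclusion follows formally from affine representability and the universal property of products of rings.
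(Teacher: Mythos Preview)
Your proposal is correct and follows essentially the same route as the paper: both pass to the \'etale $\Q$-algebra $K\otimes_\Q L$, invoke the representability of the Hom functor from Lemma~\ref{lemma:lie-algebra-group-transfer}, and then use that $\Hom$ out of an affine coordinate ring commutes with arbitrary products of rings to assemble the $(\alpha_v)_v$ into a single $B$-point of the Hom scheme. The only imprecision is your claim that uniqueness is ``built into this identification'': the diagram is a condition on $F_v$-\emph{points}, so from its commutativity you only get $f_v=\alpha_v$ on $\alg{G}(F_v)$, and to upgrade this to an equality of $F_v$-morphisms you still need that $\alg{G}(F_v)$ is Zariski dense in $\alg{G}\times_K F_v$ (which holds since $\alg{G}$ is connected and $F_v\supseteq K$ is infinite); the paper makes exactly this density step explicit.
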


Let us note that the assertion in the lemma is obvious if $S$ is a finite set. The algebraic problem that has to be overcome in the general case is that tensor products and infinite direct products do not commute.
\begin{proof}
   Consider the finite dimensional separable $\bbQ$-algebra $A = K \otimes_\bbQ L$.
   We note that $A$ is a finite direct product of fields each containing a copy of $K$ and $L$.
   By Lemma \ref{lemma:lie-algebra-group-transfer}, which obviously extends to finite dimensional separable algebras, $\mathfrak{X} = \bHom(\alg{G} \times_K A,\alg{H}\times_L A)$ is an affine scheme over $A$.  Consider the
  evaluation map on the coordinate rings
  $\mathrm{ev}^* \colon A[\alg{H}] \to A[\mathfrak{X}]\otimes_A A[\alg{G}]$.
For every $v \in S$, $\alpha_v$ corresponds to an $F_v$-rational point in $\mathfrak{X}(F_v)$. This means that there is a 
 homomorphism $\alpha_v' \colon A[\mathfrak{X}] \to F_v$ of $k$-algebras such that
$\alpha_v^* = (\alpha_v', \id) \circ \mathrm{ev}^*$.
Taking the product $\prod_{v \in S} \alpha_v' \colon A[\mathfrak{X}] \to B$ we obtain a $B$-rational point of $\mathfrak{X}$, i.e., a homomorphism $f \colon \alg{G} \times_K B \to \alg{H}\times_L B$ of group schemes defined over $B$ such that $f^* = (\prod \alpha_v' \otimes \id) \circ \mathrm{ev}_B^*$. The above diagramm commutes since
\[ (\pr_v \otimes \id) \circ f^* \circ \iota = (\alpha_v' \otimes \id) \circ \mathrm{ev}^* = \alpha_v^*. \]
Here $\iota\colon L[\alg{H}] \to B[\alg{H}]$ denotes the inclusion.

Uniqueness: Suppose that $h \colon \alg{G} \times_K B \to \alg{H}\times_L B$ is another homomorphism such that the diagramm commutes, then
\[
	\pr_v \circ t \circ h^* =  \pr_v \circ t \circ f^*
\]
holds for all $B$-algebra homomorphisms $t \colon B[\alg{G}] \to B$. Since a homomorphism into $B$ is determined by all its components, 
we obtain $t \circ h^* = t \circ f^*$ for all $t \colon B[\alg{G}] \to B$. This implies $h^* = f^*$ using the following observation:\\[1ex]
\textit{$B$-rational points of $\alg{G}$ are Zariski dense in $\alg{G}\times_K B$, i.e.,} 
\[ \bigcap_{t \in \alg{G}(B)} \ker(t) = \{0\} \subseteq B[\alg{G}].\]
Let $x \in B[\alg{G}]$ with $x \neq 0$. Then $i(x) \neq 0$ where $i\colon B[\alg{G}] \to \prod_v F_v[\alg{G}]$ is the canonical injective homomorphism. 
Since some component of $i(x)$ is non-zero, the assertion follows from the well-known fact that
 $\alg{G}(F_v)$ is Zariski dense in $\alg{G} \times_K F_v$ for every $v \in S$; see \cite[18.3]{Borel:alg-groups}.
\end{proof}

\section{Adelic superrigidity} \label{sec:adelic-superrigidity}

In this section, let \(K\) be a number field and let \(\alg{G}\) be a connected simply connected absolutely almost simple \(K\)-group.  By the seminal work of Margulis~\cite{Margulis:discrete-subgroups}*{Theorem~(C), p.\,259}, \(\alg{G}\) is algebraically superrigid in the following sense provided that \(\sum_{v \in V_\infty(K)} \mathrm{rank}_{K_v} \alg{G} \ge 2\).

\begin{definition}\label{def:superrigidity}
  We call \(\alg{G}\) \emph{algebraically superrigid} if for every field \(l\) with \(\operatorname{char}(l) = 0\), every connected absolutely almost simple \(l\)-group \(\alg{H}\) and every homomorphism \(\delta \colon \Gamma \rightarrow \alg{H}(l)\) from an arithmetic subgroup \(\Gamma\) of \(\alg{G}\) with Zariski dense image, there exists a unique field embedding \(\sigma \colon K \rightarrow l\), a unique surjective \(l\)-morphism \(\eta \colon \alg{G}\times_K l\rightarrow \alg{H}\), and a unique group homomorphism \(\nu \colon \Gamma \rightarrow Z(\alg{H})(l)\) such that \(\delta(\gamma) = \nu(\gamma) \cdot \eta(\gamma)\).
\end{definition}

We remark that it is also well-known that CSP implies certain versions of superrigidity~\citelist{\cite{Bass-Milnor-Serre:solution}*{Section~16} \cite{Raghunathan:on-csp}*{Section~7}}.  For the groups \(\mathrm{Sp}(n,1)\) and \(F_{4(-20)}\), CSP is open as we remarked earlier, but superrigidity theorems were proven by Corlette~\cite{Corlette:superrigidity} and Gromov--Schoen~\cite{Gromov-Schoen:superrigidity}.  In each case, it is not hard to see that algebraic superrigidity in the sense of our definition follows so that \(\mathrm{Sp}(n,1)\) and \(F_{4(-20)}\) can be included into our discussion even though they have rank one.

Algebraic superrigidity asserts that a homomorphism from an arithmetic group to the \(l\)-points of an algebraic \(l\)-group extends to a morphism of the algebraic groups defined over \(l\).  The purpose of this section is to complement this theorem  with a local version: a homomorphims from an arithmetic group to the finite adele points of an algebraic group extends to a morphism of the algebraic group schemes defined over the finite adele rings.  We will use standard terminology to formulate the precise statement: \(V(K) = V_\infty(K) \cup V_f(K)\) denotes the set of places of \(K\) which is the disjoint union of infinite and finite places.  For any \(v \in V(K)\), the local completion of \(K\) at \(v\) is denoted \(K_v\) with valuation ring \(\mathcal{O}_v \subset K_v\).  The ring of finite adeles \(\mathbb{A}^f_K\) of \(K\) is by definition the locally compact subring of \(\prod_{v \in V_f(K)} K_v\) consisting of those elements with almost all coordinates in \(\mathcal{O}_v\).

\begin{theorem}[Adelic superrigidity]\label{thm:adelic-superrigidity}
Let $K$ be an algebraic number field, let \(\alg{G}\) be an absolutely almost simple algebraically superrigid $K$-group and let $\Gamma \subseteq \alg{G}$ be an arithmetic subgroup.

Suppose that a number field $L$,  a connected absolutely almost simple \(L\)-group \(\alg{H}\) and a homomorphism
$\varphi \colon \Gamma \to \alg{H}(\bbA^f_{L})$ are given such that $\overline{\varphi(\Gamma)}$ has non-empty interior.
Then there are
\begin{itemize}
\item an injective map $w \colon V_f(L) \to V_f(K)$,
\item isomorphisms of topological fields $j_v\colon L_v \to K_{w(v)}$ for all $v \in V_f(L)$ which induce an injective homomorphism $j \colon \bbA^f_L \to \bbA^f_K$ of topological rings,
\item a homomorphism of group schemes over $j$
\[\eta\colon \alg{G} \times_K {\bbA^f_K} \to \alg{H} \times_L \bbA_L^f, \]
\item a group homomorphism $\nu \colon \Gamma \to Z(\alg{H})(\bbA^f_L)$ with finite image
\end{itemize}
such that $\varphi(\gamma) = \nu(\gamma)\eta(\gamma)$ for all $\gamma \in \Gamma$.
In addition, $w$, $\eta$, $\nu$  and the isomorphisms $j_v$ for $v\in V_f(L)$ are uniquely determined by this condition.

Moreover, $[L:\bbQ] \leq [K:\bbQ]$ and if equality occurs, then $w$ is a bijection and $j$ is an isomorphism
\end{theorem}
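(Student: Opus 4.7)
The plan is to apply algebraic superrigidity componentwise to each projection $\varphi_v := \mathrm{pr}_v \circ \varphi \colon \Gamma \to \alg{H}(L_v)$ and then assemble the resulting local data into global objects using the scheme-theoretic tools of Section~\ref{sec:tools}. Since the projection $\mathrm{pr}_v$ is open, the closure $\overline{\varphi_v(\Gamma)}$ has non-empty interior and Lemma~\ref{lemma:zariski-dense}\eqref{it:open-dense} gives Zariski density of $\varphi_v(\Gamma)$ in $\alg{H}\times_L L_v$. Algebraic superrigidity (Definition~\ref{def:superrigidity}) then produces a unique triple $(\sigma_v, \eta_v, \nu_v)$ with $\sigma_v\colon K\hookrightarrow L_v$ a field embedding, $\eta_v\colon \alg{G}\times_{K,\sigma_v} L_v \to \alg{H}\times_L L_v$ a surjective $L_v$-morphism, and $\nu_v\colon \Gamma\to Z(\alg{H})(L_v)$ a central character satisfying $\varphi_v = \nu_v\cdot\eta_v$. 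The pullback of the valuation of $L_v$ along $\sigma_v$ singles out a unique finite place $w(v)\in V_f(K)$, and by completeness $\sigma_v$ extends to $\hat\sigma_v\colon K_{w(v)}\hookrightarrow L_v$.

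The key step is to show that $\hat\sigma_v$ is always an isomorphism and that $w$ is injective. Fix $w_0\in V_f(K)$ and let $V_0 := w^{-1}(w_0)$; this set is finite because each $L_v$ with $v\in V_0$ is a finite extension of the common local field $K_{w_0}$. The composition
\[
 \Phi\colon \alg{G}(K_{w_0}) \xrightarrow{(\hat\sigma_v)_{v\in V_0}} \prod_{v\in V_0}\alg{G}(L_v) \xrightarrow{(\eta_v)_{v\in V_0}} \prod_{v\in V_0}\alg{H}(L_v)
\]
is a polynomial $K_{w_0}$-map, so the Hausdorff closure of its image has $K_{w_0}$-analytic dimension at most $\dim\alg{G} = \dim\alg{H}$ (the equality holds since each $\eta_v$ is an isogeny between absolutely almost simple groups). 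Because $\varphi_v = \nu_v\eta_v$, the projection $\mathrm{pr}_{V_0}(\varphi(\Gamma))$ is contained in the union of the finitely many translates of $\Phi(\alg{G}(K_{w_0}))$ by elements of the finite group $\prod_{v\in V_0}Z(\alg{H})(L_v)$, hence its closure also has dimension at most $\dim\alg{H}$. On the other hand, openness of $\mathrm{pr}_{V_0}$ and the non-empty interior of $\overline{\varphi(\Gamma)}$ force this closure to have non-empty interior in $\prod_{v\in V_0}\alg{H}(L_v)$, whose $K_{w_0}$-dimension is $\dim\alg{H}\cdot\sum_{v\in V_0}[L_v\!:\!K_{w_0}]$. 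Arguing as in Lemma~\ref{lemma:zariski-dense}\eqref{it:dimension-not-open}, this is possible only when $\sum_{v\in V_0}[L_v\!:\!K_{w_0}]=1$, i.e.\ $V_0=\{v\}$ and $\hat\sigma_v$ is an isomorphism. Setting $j_v := \hat\sigma_v^{-1}$, the family $(j_v)_{v\in V_f(L)}$ assembles into the required injective topological ring map $j\colon\bbA^f_L \hookrightarrow \bbA^f_K$.

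Next, Lemma~\ref{lemma:tuple-yields-morphism} applied to $(\sigma_v,\eta_v)_{v\in V_f(L)}$ with $F_v = L_v$ produces a single morphism $f\colon \alg{G}\times_K B \to \alg{H}\times_L B$ of group schemes over $B = \prod_{v\in V_f(L)} L_v$ whose $v$-th component is $\eta_v\circ\hat\sigma_v$. Viewing $\bbA^f_L\subseteq B$ and $K\subseteq\bbA^f_L$ via $(\sigma_v)$, the element $\varphi(\gamma)\in\alg{H}(\bbA^f_L)$ is integral at almost every place for each $\gamma\in\Gamma$, and similarly $\nu_v(\gamma)\in Z(\alg{H})(L_v) = Z(\alg{H})(\mathcal{O}_v)$ for almost every $v$, because $Z(\alg{H})$ is a finite group scheme of multiplicative type. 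Therefore $f(\gamma) = \nu(\gamma)^{-1}\varphi(\gamma)\in\alg{H}(\bbA^f_L)$; Zariski density of $\Gamma\subseteq\alg{G}(K)$ together with Lemma~\ref{lemma:defined-over-subalgebra} then implies that $f$ descends to a morphism over $\bbA^f_L\subseteq B$. Transporting this morphism along $j$ yields the desired $\eta\colon\alg{G}\times_K \bbA^f_K \to \alg{H}\times_L \bbA^f_L$ over $j$, and $\nu := (\nu_v)_v$ gives the decomposition $\varphi = \nu\cdot\eta$.

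To finish, observe that $\Gamma$ has Kazhdan's property (T) in every situation where algebraic superrigidity applies (higher rank as well as $\mathrm{Sp}(n,1)$ and $F_{4(-20)}$); hence $\Gamma^{\mathrm{ab}}$ is finite, and since $Z(\alg{H})(\bbA^f_L)$ is abelian, $\nu$ factors through $\Gamma^{\mathrm{ab}}$ and has finite image. The degree bound is
\[
[L\!:\!\bbQ] = \sum_p\sum_{v|p}[L_v\!:\!\bbQ_p] = \sum_p\sum_{v|p}[K_{w(v)}\!:\!\bbQ_p] \leq \sum_p\sum_{w|p}[K_w\!:\!\bbQ_p] = [K\!:\!\bbQ],
\]
with equality forcing $w$ to be surjective and $j$ a topological ring isomorphism. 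Uniqueness of $w$, $j_v$, $\eta$ and $\nu$ propagates from the uniqueness assertions in Definition~\ref{def:superrigidity} and Lemma~\ref{lemma:tuple-yields-morphism}. The main obstacle throughout is the second paragraph: converting the analytic hypothesis that $\overline{\varphi(\Gamma)}$ has non-empty interior into the rigid algebraic statement that each $\hat\sigma_v$ is an isomorphism requires the careful dimension count above, together with the key observation that the central cocycles $\nu_v$ live in the zero-dimensional profinite group $Z(\alg{H})(\bbA^f_L)$ and so can be absorbed without affecting the dimension inequality.
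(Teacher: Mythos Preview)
Your strategy is the paper's: apply superrigidity place by place, pin down $w(v)$ and the field isomorphisms, assemble the $\eta_v$ via Lemma~\ref{lemma:tuple-yields-morphism}, and descend to $\bbA^f_L$ via Lemma~\ref{lemma:defined-over-subalgebra}. Your unified dimension count over the fibre $V_0=w^{-1}(w_0)$ is a pleasant streamlining of what the paper does in two stages (Lemma~\ref{lem:local-rigidity} for ``$\hat\sigma_v$ is surjective'' and then Step~2 of the main proof for injectivity of $w$).

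There is one genuine gap. You claim $\Gamma$ has property~(T) whenever $\alg{G}$ is algebraically superrigid, and deduce that $\nu$ has finite image from the finiteness of $\Gamma^{\mathrm{ab}}$. But take $\alg{G}=\SL_2$ over a real quadratic field $K$: the rank condition $\sum_{v\mid\infty}\rank_{K_v}\alg{G}=2$ holds, so Margulis gives algebraic superrigidity, yet $\Gamma$ sits as a lattice in $\SL_2(\R)\times\SL_2(\R)$, a group without property~(T); since property~(T) passes in both directions between a locally compact group and its lattices, $\Gamma$ does not have~(T) either. The paper's argument is both simpler and more general: $\Gamma$ is finitely generated (being arithmetic), each $\nu_v$ lands in $Z(\alg{H})(L_v)\subseteq Z(\alg{H})(\bbC)=:Z$, and a finitely generated group admits only finitely many homomorphisms to the fixed finite group~$Z$; hence $\bigcap_v\ker\nu_v$ has finite index in $\Gamma$ and $\nu$ has finite image. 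Two cosmetic points: $\Phi$ is $K_{w_0}$-analytic rather than polynomial (restriction of scalars intervenes), and your displayed degree inequality should be read as one inequality per prime~$p$, not summed over~$p$.
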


As preparation for the proof, we discuss a local analogue.
\begin{lemma}[Local superrigidity]\label{lem:local-rigidity} 
Let $K$ be an algebraic number field, let \(\alg{G}\) be an absolutely almost simple algebraically superrigid $K$-group and let $\Gamma \subseteq \alg{G}$ be an arithmetic subgroup.

For every local field $L_v$ with $\mathrm{char}(L_v) = 0$,  every connected absolutely almost simple \(L_v\)-group \(\alg{H}\) and every homomorphism
$\varphi_v \colon \Gamma \to \alg{H}(L_v)$ such that $\overline{\varphi_v(\Gamma)}$ has non-empty interior, there are
\begin{itemize}
\item a place $w$ of $K$ and an isomorphism $j_v \colon L_v\to K_w$ of topological fields,
\item a homomorphism of algebraic groups over $j_v$
\[\eta_v\colon \alg{G} \times_K {K_w} \to \alg{H}, \]
\item a group homomorphism $\nu_v \colon \Gamma \to Z(\alg{H})(L_v)$
\end{itemize}
such that $\varphi_v(\gamma) = \nu_v(\gamma)\eta_v(\gamma)$ for all $\gamma \in \Gamma$.
In addition, $w, j_v, \eta_v$ and $\nu_v$ are uniquely determined by this condition.
\end{lemma}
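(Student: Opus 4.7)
The plan is to apply Margulis' algebraic superrigidity (Definition~\ref{def:superrigidity}) with $l = L_v$ to obtain an abstract field embedding $\sigma\colon K \hookrightarrow L_v$ and a morphism $\eta$ covering $\varphi_v$ up to center, and then to upgrade $\sigma$ to a topological isomorphism $K_w \cong L_v$ for some place $w$ of $K$. The open-interior hypothesis will be precisely the ingredient that forces this upgrade. As a first step, Lemma~\ref{lemma:zariski-dense}\eqref{it:open-dense} applied to $\overline{\varphi_v(\Gamma)}$ shows that $\varphi_v(\Gamma)$ is Zariski dense in $\alg{H}$, so algebraic superrigidity yields unique data: an embedding $\sigma\colon K \hookrightarrow L_v$, a surjective $L_v$-morphism $\eta\colon \alg{G}\times_{K,\sigma} L_v \to \alg{H}$, and a homomorphism $\nu\colon \Gamma \to Z(\alg{H})(L_v)$ satisfying $\varphi_v(\gamma) = \nu(\gamma)\eta(\sigma(\gamma))$. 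Because $\alg{G}\times_{K,\sigma} L_v$ is simply connected absolutely almost simple and $\alg{H}$ is likewise absolutely almost simple, $\eta$ must be a central isogeny; in particular $\eta$ is finite \'etale and $\nu(\Gamma) \subseteq Z(\alg{H})(L_v)$ is a finite group.

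Pulling back the absolute value on $L_v$ through $\sigma$ determines a place $w$ of $K$ and a unique continuous extension $\hat{\sigma}\colon K_w \hookrightarrow L_v$ to the completion. Let $F := \hat{\sigma}(K_w)$, a closed subfield of $L_v$. The crux of the argument, and the main technical obstacle, is showing $F = L_v$, for then $j_v := \hat{\sigma}^{-1}$ is the desired isomorphism of topological fields. Assume the contrary, so that $L_v$ is a finite extension of $F$ of degree $\geq 2$. Lemma~\ref{lemma:zariski-dense}\eqref{it:dimension-not-open} then asserts that the embedding $\alg{G}(F) \hookrightarrow \alg{G}(L_v)$ has closed image with empty interior, hence is closed nowhere dense. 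On $L_v$-points, the finite \'etale surjection $\eta$ is a finite covering map, hence a proper local diffeomorphism. A straightforward Baire category argument on evenly covered neighborhoods shows that such maps send closed nowhere dense subsets to closed nowhere dense subsets; applied to $\alg{G}(F)$ this makes $\eta(\alg{G}(F))$ closed nowhere dense in $\alg{H}(L_v)$. Since $\sigma(\Gamma) \subseteq \alg{G}(F)$ and $\nu(\Gamma)$ is finite, $\varphi_v(\Gamma) = \nu(\Gamma)\cdot \eta(\sigma(\Gamma))$ is contained in the finite union $\bigcup_{z \in \nu(\Gamma)} z\cdot \eta(\alg{G}(F))$, itself closed nowhere dense in $\alg{H}(L_v)$. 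This would force $\overline{\varphi_v(\Gamma)}$ to have empty interior, contradicting the hypothesis, so $F = L_v$.

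With $F = L_v$ secured, I set $j_v := \hat{\sigma}^{-1}$ and reinterpret $\eta$ and $\nu$ via $j_v$ as $\eta_v$ and $\nu_v$; the relation $\varphi_v = \nu_v \cdot \eta_v$ is then a tautological consequence of what was already proved. Uniqueness of $w$, $j_v$, $\eta_v$, and $\nu_v$ is inherited directly from the uniqueness clauses in algebraic superrigidity: any valid $j_v$ recovers $\sigma = j_v^{-1}|_K$, which in turn determines $\eta$ and $\nu$ uniquely. I expect the $F = L_v$ step to be the only real obstacle; both parts of Lemma~\ref{lemma:zariski-dense} enter exactly there, and this is the only point in the argument where the open-interior hypothesis on $\varphi_v(\Gamma)$ is genuinely used.
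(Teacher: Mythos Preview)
Your proof is correct and follows essentially the same route as the paper's: apply algebraic superrigidity after using Lemma~\ref{lemma:zariski-dense}\eqref{it:open-dense} for Zariski density, extract the place $w$ from $\sigma$, and derive a contradiction from $[L_v:K_w]\ge 2$ via Lemma~\ref{lemma:zariski-dense}\eqref{it:dimension-not-open} and a Baire argument using that $\eta_v$ is a closed map with finite kernel. The only cosmetic difference is that the paper works with $\overline{\Gamma}\subseteq\alg{G}(L_v)$ and pulls back along the closed map $\eta_v$, whereas you push forward $\alg{G}(F)$ through the finite covering; both arguments are equivalent, though you should note that $\eta$ need not be surjective on $L_v$-points (it is a finite covering onto an open subgroup, which suffices).
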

\begin{proof}
By Lemma \ref{lemma:zariski-dense} \eqref{it:open-dense} the image of $\varphi_v$ is Zariski dense. We apply 
algebraic superrigidity to obtain a unique embedding $\sigma\colon K \to L_v$,
 a unique surjective $L_v$-homomorphism $\eta_v \colon \alg{G} \times_K L_v \to \alg{H}$ and a unique group homomorphism $\nu_v \colon \Gamma \to Z(\alg{H})(L)$ such that
$ \varphi_v(\gamma) = \nu_v(\gamma) \eta_v(\gamma)$ for all $\gamma \in \Gamma$.

Since $L_v$ is a complete field with an absolute value, the restriction of the absolute value to $K$ defines a unique place $w$ of $K$.
The closure of $K$ in $L_v$ is the completion $K_w$ of $K$ with respect to the induced absolute value.

Claim: $K_w = L_v$.\\[1ex]
 Assume for a contradiction that $[L_v:K_w] \geq 2$.
Since 
\[
	\Gamma \subseteq \alg{G}(K_w) \subsetneq \alg{G}(L_v)
\]
we deduce from Lemma \ref{lemma:zariski-dense} \eqref{it:dimension-not-open} that $\Gamma$ is nowhere dense in $\alg{G}(L_v)$.

\medskip

Intermediate step: \textit{The homomorphism $\eta_v \colon \alg{G}(L_v) \to \alg{H}(L_v)$ is closed and has finite central kernel $M$.}\\[1ex]
The kernel is finite since $\alg{G}$ is absolutely almost simple and $\eta_v$ is non-trivial (the image $\varphi_v(\Gamma)$ is infinite!). Since $L_v$ has characteristic zero, it follows that  $\eta_v$ is a central isogeny; see \cite[22.3]{Borel:alg-groups}. Since $\eta_v$ is $L_v$-analytic and the Lie $L_v$-algebra of $\alg{H}(L_v)$ is simple, it follows that $\eta_v$ is a submersion and therefore open; see \cite[I (2.5.4)]{Margulis:discrete-subgroups}. By \cite[I (2.3.4)]{Margulis:discrete-subgroups}, the image $\eta_v(\alg{G}(L_v))$ is closed. Thus
$\eta_v$ induces a homeomorphism between $\alg{G}(L_v)/M$ and a closed subgroup of $\alg{H}(L_v)$. The projection $\alg{G}(L_v) \to \alg{G}(L_v)/M$ is closed because $M$ is finite; this proves the intermediate step. 

\medskip

Contradiction: \textit{$\overline{\varphi_v(\Gamma)}$ has empty interior.}\\[1ex]
 The closed set $M \cdot \overline{\Gamma}$ (a finite union of nowhere dense sets) has empty interior in $\alg{G}(L_v)$.
Since $\eta_v$ is closed, we obtain
 	\[\eta_v^{-1}(\overline{\eta_v(\Gamma)}) \subseteq M \cdot \overline{\Gamma}\]
 and thus $\overline{\eta_v(\Gamma)}$ has empty interior. Again $\nu_v(\Gamma)\overline{\eta_v(\Gamma)}$
 is a closed set with empty interior, since it is a finite union of nowhere dense sets.
This contradicts the assumption that  the subset $\overline{\varphi_v(\Gamma)}$ has non-empty interior.
\end{proof}
\begin{proof}[Proof of Theorem \ref{thm:adelic-superrigidity}]
For every finite set $S$ of finite places of $L$, we denote by 
\[
	\mathrm{pr}_S\colon \bbA^f_L \to \prod_{v \in S} L_v
\]
the projection and by
\[\alg{H}(\mathrm{pr}_S)\colon \alg{H}(\bbA^f_L) \to \prod_{v \in S} \alg{H}(L_v)\]
 the corresponding projection on the groups of rational points. We define $\varphi_S := \alg{H}(\mathrm{pr}_S) \circ \varphi \colon \Gamma \to \prod_{v \in S} \alg{H}(L_v)$.
 
 \medskip

Step 1: \textit{For every $S \subseteq  V_f(L)$ the closure \(\overline{\varphi_v(\Gamma)} \subseteq \alg{H}(L_v)\) has a non-empty interior.
}\\[1ex]
Indeed, $\alg{H}(\pr_S)$ is open and $\alg{H}(\pr_S)\bigl(\overline{\varphi(\Gamma)}\bigr) \subseteq \overline{\varphi_S(\Gamma)}$, so  $\overline{\varphi_S(\Gamma)}$  contains a non-empty open set.

In particular, this applies to $S = \{v\}$. Local rigidity Lemma \ref{lem:local-rigidity} implies that for every finite place $v$ of $L$, there is a unique finite place $w(v)$ of $K$, a unique isomorphism $j_{v}\colon K_{w(v)} \to L_v$, a unique homomorphism
\[
	\eta_v\colon \alg{G} \times_K {K_{w(v)}} \to \alg{H}\times L_v
\]
over $j_v$, and a unique group homomorphism $\nu_v \colon \Gamma \to Z(\alg{H})(L_v)$ such that
\[
	\alg{H}(\mathrm{pr}_v) (\varphi(\gamma)) = \nu_v(\gamma)\eta_v(\gamma)
\]
holds for all $\gamma \in \Gamma$.

\medskip

Step 2: \emph{The map $V_f(L) \to V_f(K)$ with $v \mapsto w(v)$ is injective.}\\[1ex]
Assume for a contradiction that $w(v_1) = w(v_2)$ for two distinct finite places $v_1, v_2 \in V_f(L)$. Put $w = w(v_1) = w(v_2)$.
We know that the homomorphism $\varphi_{v_1,v_2}\colon \Gamma \to \alg{H}(L_{v_1}) \times \alg{H}(L_{v_2})$ is given by the formula
\[
	\gamma \mapsto (\nu_{v_1}(\gamma)\eta_{v_1}(\gamma),\nu_{v_2}(\gamma)\eta_{v_2}(\gamma))
\]
The image of the diagonal map $\alg{H}(K_w) \to \alg{H}(L_{v_1}) \times \alg{H}(L_{v_2})$ is nowhere dense (by Lemma \ref{lemma:zariski-dense} \eqref{it:dimension-not-open}) and therefore the image $J_\eta$ of the map $(\eta_{v_1}, \eta_{v_2}) \colon \alg{G}(K_w) \to \alg{H}(L_{v_1}) \times \alg{H}(L_{v_2})$ is nowhere dense. In addition, the image $J_\nu$ of the map $(\nu_{v_1},\nu_{v_2})\colon \Gamma \to \alg{H}(L_{v_1})  \times \alg{H}(L_{v_2})$ is finite.
By Baire's category theorem, the set $J_\nu J_\eta$ is nowhere dense and we deduce that the image of $\varphi_{v_1,v_2}$ is nowhere dense.
This contradicts Step~1 above.

\medskip

Step 3: \emph{$ [L:\bbQ] \leq [K:\bbQ]$ and in case of equality, the map $V_f(L) \to V_f(K)$ with $v \mapsto w(v)$ is surjective.}\\[1ex]
For every prime number $p$, the local degrees over $p$ add up to the global degree and we obtain
\[
	[L:\bbQ] = \sum_{v | p} [L_v : \bbQ_p] = \sum_{v | p } [K_{w(v)}:\bbQ_p] \leq [K:\bbQ].
\]
Clearly, if $[K:\bbQ] = [L:\bbQ]$ then it follows that all places of $K$ over $p$ occur on the right hand side.

\medskip

Observation: \textit{The kernel $\Gamma_\nu$ of the map $\nu: \Gamma \to \prod_{v\in V_f(L)}Z(\alg{H})(L_v) $ with $\gamma \mapsto (\nu_v(\gamma))_{v\in V_f(L)}$ has finite index.}\\[1ex] 
For every place $v \in V_f(L)$, the group $Z(\alg{H})(L_v)$ is a subgroup of the finite group $Z(\alg{H})(\overline{L_v}) \cong Z(\alg{H})(\bbC) =: Z$.
The arithmetic group $\Gamma$ is finitely generated and hence there are only finitely many homomorphisms from $\Gamma$ to $Z$.
The intersection of the kernels of all these homomorphisms has finite index in $\Gamma$ and is contained in $\Gamma_\nu$.

\medskip

Define $B = \prod_{v \in V_f(L)} L_v$. Observe that $B$ is an $L$-algebra and, in addition, a $K$-algebra identifying $L_v$ with $K_{w(v)}$ for all $v$ using the isomorphisms $j_v$. Lemma \ref{lemma:tuple-yields-morphism} implies the following

 \textit{There is a unique morphism of group schemes $\eta^0 \colon \alg{G}\times_K B \to \alg{H}\times_L B$ such that the following diagram commutes for all $v \in V_f(L)$:}
 \begin{equation*}
      \xymatrix{
        \alg{G}(B) \ar[d]_{\alg{G}(\pr_v)}\ar[r]^{\eta^0} & \alg{H}(B)\ar[d]^{\alg{H}(\pr_v)} \\
        \alg{G}(K_{w(v)}) \ar[r]^{\eta_v} & \alg{H}(L_v).
        }
      \end{equation*}
      
The arithmetic group $\Gamma_\nu \subseteq \alg{G}(K)$ is Zariski dense and 
$\eta(\Gamma_\nu) = \varphi(\Gamma_\nu) \subseteq \alg{H}(\bbA^f_L)$. We apply Lemma \ref{lemma:defined-over-subalgebra} 
with $E = \bbA^f_L$ (as above, this is an $L$- and a $K$-algebra) to see that $\eta$ is defined over $\bbA^f_L$, i.e., there is a morphism of group schemes
\[
	\eta^0\colon \alg{G} \times_K \bbA^f_L \to \alg{H}\times_L \bbA^f_L
\]
such that the diagram
 \begin{equation*}
      \xymatrix{
        \alg{G}(\bbA^f_L) \ar[d]_{\alg{G}(\pr_v)}\ar[r]^{\eta^0} & \alg{H}(\bbA^f_L)\ar[d]^{\alg{H}(\pr_v)} \\
        \alg{G}(K_{w(v)}) \ar[r]^{\eta_v} & \alg{H}(L_v)
        }
\end{equation*}
commutes for all $v \in V_f(L)$.

We combine the isomorphisms $j_v\colon L_v \to K_{w(v)}$ into an injective homomorphism 
$j \colon \bbA^f_L \to \bbA^f_K$ of topological rings which induces a homomorphisms of group schemes
\[
	\tilde{j} \colon \alg{G} \times_K \bbA^f_K \to \alg{G} \times_K \bbA^f_L.
\]
Composing $\eta^0$ with $\tilde{j}$ yields the required morphism
\[
	\eta \colon \alg{G} \times_K \bbA^f_K \to \alg{H}\times_L \bbA^f_L.
\]
By construction, we have
\[
	\varphi(\gamma) = \nu(\gamma)\eta(\gamma)
\]
and since $\varphi(\gamma),\eta(\gamma) \in \alg{H}(\bbA^f_L)$, it follows that 
\[
	\nu(\gamma) \in Z(\alg{H})(B) \cap \alg{H}(\bbA_L^f) = Z(\alg{H})(\bbA^f_L).
\]

\medskip

Finally, we discuss uniqueness.  Assume that $w'$, $j'_v$ (for $v \in V_f(L)$), $\eta'$ and $\nu'$ as before satisfy the condition $\varphi(\gamma) = \nu'(\gamma)\eta'(\gamma)$ for all $\gamma \in \Gamma$.
For all $v \in V_f(L)$, the homomorphism $\eta'$ induces a homomorphism
\[
	\eta'_v \colon \alg{G}\times_K K_{w'(v)} \to \alg{H}\times_L L_v
	\]
over $j'_v$ such that the diagramm
 \begin{equation*}
      \xymatrix{
        \alg{G}(\bbA^f_K) \ar[d]_{\alg{G}(\pr_v)}\ar[r]^{\eta'} & \alg{H}(\bbA^f_L)\ar[d]^{\alg{H}(\pr_v)} \\
        \alg{G}(K_{w'(v)}) \ar[r]^{\eta'_v} & \alg{H}(L_v),
        }
\end{equation*}
commutes. We obtain
\begin{align*}
	\varphi_v(\gamma) &= \alg{H}(\pr_v)\bigl(\nu'(\gamma)\eta'(\gamma)\bigr) = \alg{H}(\pr_v)(\nu'(\gamma)) \; \alg{H}(\pr_v)(\eta'(\gamma))\\
	&= \alg{H}(\pr_v)(\nu'(\gamma)) \; \eta'_v(\gamma)
\end{align*}
and the uniqueness statement in Lemma \ref{lem:local-rigidity} on local superrigidity implies $w'(v) = w(v)$, $j_v = j'_v$ and $\eta'_v = \eta_v$ for all $v \in V_f(L)$. The uniqueness in Lemma \ref{lemma:tuple-yields-morphism} implies that $\eta' = \eta$. Since 
$\varphi(\gamma) = \nu'(\gamma)\eta(\gamma) = \nu(\gamma)\eta(\gamma)$,
we conclude that $\nu' = \nu$. 
\end{proof}

As our main application of adelic superrigidity, we will now see that if two algebraically superrigid groups \(\mathbf{G}\) and \(\mathbf{H}\) have profinitely isomorphic arithmetic subgroups, then \(\mathbf{G}\) is adelically isomorphic to \(\mathbf{H}\).  To give the precise statement, we recall that as in \cite{Kammeyer-Kionke:rigidity}*{(A.1) and (A.2)}, an arithmetic subgroup \(\Gamma \subset \mathbf{G}(K)\) comes with a canonical projection \(q_{\mathbf{G}} \colon \widehat{\Gamma} \rightarrow \overline{\Gamma} \subseteq \mathbf{G}(\mathbb{A}^f_K)\) from the profinite completion to the \emph{congruence completion} and the latter is an open subgroup of the finite adele points of \(\mathbf{G}\) by strong approximation.

\begin{theorem}\label{thm:profinite-isomorphism-adelic}
Let $\alg{G}$ and $\alg{H}$ be absolutely almost simple algebraically superrigid algebraic groups defined over number fields $K$ and $L$, respectively.  Let $\Gamma \subseteq \alg{G}(K)$ and $\Delta \subseteq \alg{H}(L)$ be arithmetic subgroups.

Assume that $\Psi \colon \widehat{\Gamma} \to \widehat{\Delta}$ is an isomorphism of profinite completions. Then there are
\begin{itemize}
\item an isomorphism $j_1 \colon \bbA^f_L \to \bbA_K^f$ of topological rings,
\item an isomorphism of group schemes $\eta_1 \colon \alg{G} \times_K \bbA_K^f \to \alg{H} \times_L \bbA_L^f$ over $j_1$,
\item a homomorphism $\hat{\nu}_1 \colon \widehat{\Gamma} \to Z(\alg{H})(\bbA^f_L)$ with finite image
\end{itemize}
such that $q_{\alg{H}}(\Psi(\hat{\gamma}))  = \nu_1(\hat{\gamma})\eta_1(q_{\alg{G}}(\hat{\gamma}))$ for all $\hat{\gamma} \in \widehat{\Gamma}$.

In particular, there is an open normal subgroup $U \normal_o \widehat{\Gamma}$ such that
 \begin{equation*}
      \xymatrix{
      	U \ar[d]_{q_\alg{G}}\ar[r]^{\cong}_{\Psi} & \Psi(U) \ar[d]_{q_{\alg{H}}}\\
        \alg{G}(\bbA^f_K) \ar[r]^{\cong}_{\eta_1} & \alg{H}(\bbA^f_L),
        }
\end{equation*}
commutes, i.e., the finite index subgroups $\Gamma' = \Gamma \cap U$ and $\Delta' = \Delta \cap \Psi(U)$ have isomorphic profinite completions and isomorphic pro-congruence completions.
\end{theorem}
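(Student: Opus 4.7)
The plan is to apply the adelic superrigidity theorem (Theorem \ref{thm:adelic-superrigidity}) to the composition $\varphi := q_\alg{H} \circ \Psi|_\Gamma \colon \Gamma \to \alg{H}(\bbA^f_L)$. To verify its only non-trivial hypothesis, observe that $\Gamma$ is dense in $\widehat{\Gamma}$ and $q_\alg{H} \circ \Psi$ is continuous, so $\varphi(\Gamma)$ is dense in $q_\alg{H}(\widehat{\Delta}) = \overline{\Delta}$, which is open in $\alg{H}(\bbA^f_L)$ by strong approximation. Hence $\overline{\varphi(\Gamma)} = \overline{\Delta}$ has non-empty interior, and the theorem produces an injection $w\colon V_f(L)\to V_f(K)$, an injective ring homomorphism $j\colon \bbA^f_L\to\bbA^f_K$, a scheme morphism $\eta\colon\alg{G}\times_K\bbA^f_K\to\alg{H}\times_L\bbA^f_L$ over $j$, and a homomorphism $\nu\colon\Gamma\to Z(\alg{H})(\bbA^f_L)$ with finite image satisfying $\varphi(\gamma)=\nu(\gamma)\eta(\gamma)$ for all $\gamma\in\Gamma$, together with the estimate $[L:\bbQ]\leq [K:\bbQ]$.

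Next I would apply the same theorem to the inverse isomorphism $\Psi^{-1}\colon\widehat{\Delta}\to\widehat{\Gamma}$, obtaining the reverse inequality $[K:\bbQ]\leq [L:\bbQ]$. Equality then forces $w$ to be a bijection and $j=:j_1$ to be an isomorphism of topological rings, yielding the desired scheme isomorphism $\eta_1:=\eta$. The map $\nu$ has finite image and therefore factors through a finite quotient of $\Gamma$; by the universal property of the profinite completion it extends uniquely to a continuous homomorphism $\hat{\nu}_1\colon\widehat{\Gamma}\to Z(\alg{H})(\bbA^f_L)$ with the same finite image.

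Both sides of the identity $q_\alg{H}(\Psi(\hat{\gamma})) = \hat{\nu}_1(\hat{\gamma})\,\eta_1(q_\alg{G}(\hat{\gamma}))$ are continuous maps $\widehat{\Gamma}\to\alg{H}(\bbA^f_L)$ and agree on $\Gamma$ by construction; since the target is Hausdorff and $\Gamma$ is dense in $\widehat{\Gamma}$, the identity extends to all of $\widehat{\Gamma}$. Setting $U:=\ker\hat{\nu}_1$ yields an open normal finite-index subgroup on which $q_\alg{H}\circ\Psi=\eta_1\circ q_\alg{G}$, which is the asserted commutative square. Openness of $U$ ensures that $\Gamma':=\Gamma\cap U$ is dense in $U$, so $U\cong\widehat{\Gamma'}$, and analogously $\Psi(U)\cong\widehat{\Delta'}$ for $\Delta':=\Delta\cap\Psi(U)$. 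The image $q_\alg{G}(U)$ is the pro-congruence completion of $\Gamma'$ and is mapped by $\eta_1$ isomorphically onto $q_\alg{H}(\Psi(U))$, the pro-congruence completion of $\Delta'$.

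The main obstacle I anticipate is precisely the step that upgrades the inequality $[L:\bbQ]\leq[K:\bbQ]$ in Theorem \ref{thm:adelic-superrigidity} to an equality: this is where the invertibility of $\Psi$, and hence the ability to run adelic superrigidity symmetrically on $\Psi^{-1}$, is essential. The remaining work is bookkeeping, relying on continuity of $q_\alg{G}$, $q_\alg{H}$, $\Psi$, $\eta_1$, and $\hat{\nu}_1$, together with density of $\Gamma$ in $\widehat{\Gamma}$ and the finite order of the central factor.
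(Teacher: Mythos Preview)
Your overall strategy matches the paper's, but there is a genuine gap at the sentence ``Equality then forces $w$ to be a bijection and $j=:j_1$ to be an isomorphism of topological rings, yielding the desired scheme isomorphism $\eta_1:=\eta$.'' Theorem~\ref{thm:adelic-superrigidity} only produces a \emph{homomorphism} of group schemes $\eta$ over $j$; the fact that $j$ happens to be a ring isomorphism says nothing, a priori, about invertibility of $\eta$ as a morphism of $\mathbb{A}^f$-group schemes. You have only used the application of adelic superrigidity to $\Psi^{-1}$ to extract the reverse degree inequality, discarding the morphism $\eta_2$ it also provides; but $\eta_2$ is exactly what is needed to invert $\eta_1$.

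The paper fills this gap as follows. One keeps both outputs of adelic superrigidity, obtaining $\eta_1,\nu_1$ from $\varphi_1=q_{\alg{H}}\circ\Psi|_\Gamma$ and $\eta_2,\nu_2$ from $\varphi_2=q_{\alg{G}}\circ\Psi^{-1}|_\Delta$. One then passes to the smaller open subgroup $U=\ker(\hat\nu_1)\cap\ker(\hat\nu_2\circ\Psi)$ (your choice $U=\ker\hat\nu_1$ is not enough here), sets $\Gamma'=\Gamma\cap U$ and $\Delta'=\Delta\cap\Psi(U)$, and proves the key claim $\eta_2(\eta_1(\gamma))=\gamma$ for all $\gamma\in\Gamma'$ by an approximation argument: choose $\delta_n\in\Delta'$ with $\delta_n\to\Psi(\gamma)$ in $\Psi(U)$, so that
\[
\eta_2(\eta_1(\gamma))=\eta_2\bigl(q_{\alg{H}}(\Psi(\gamma))\bigr)=\lim_n \eta_2(\delta_n)=\lim_n q_{\alg{G}}(\Psi^{-1}(\delta_n))=q_{\alg{G}}(\gamma)=\gamma,
\]
using that on $\Gamma'$ and $\Delta'$ the central twists vanish. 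Now the \emph{uniqueness} clause of Theorem~\ref{thm:adelic-superrigidity}, applied to the arithmetic subgroup $\Gamma'$ with $\varphi$ the inclusion, forces $\eta_2\circ\eta_1=\id$, $j_1\circ j_2=\id$, and $w_1\circ w_2=\id$; symmetry gives the other direction. Only then is $\eta_1$ known to be an isomorphism, and your concluding paragraph (the commuting square, identification of $U$ with $\widehat{\Gamma'}$, and the isomorphism of pro-congruence completions) goes through.
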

\begin{proof}
Define $\varphi_1 \colon \Gamma \to \alg{H}(\bbA_L^f)$ to be the composition of $\Psi$ with the projection $q_{\alg{H}}\colon \widehat{\Delta} \to \overline{\Delta} \subseteq \alg{H}(\bbA_L^f)$ onto the congruence completion. 
Similarly, define  $\varphi_2 \colon \Delta \to \alg{G}(\bbA_K^f)$ to be the composition of $\Psi^{-1}$ with the projection $q_{\alg{G}}\colon\widehat{\Gamma} \to \overline{\Gamma} \subseteq \alg{G}(\bbA_K^f)$. 

Adelic superrigidity \ref{thm:adelic-superrigidity} provides maps, resp. morphisms
\begin{align*}
&w_1 \colon V_f(L) \to V_f(K), \quad &&w_2 \colon V_f(K) \to V_f(L)\\
&j_{1,v} \colon L_v \to K_{w_1(v)},\quad &&j_{2,u}\colon K_u \to L_{w_2(u)}\\ 
&\eta_1\colon \alg{G}\times_K \bbA^f_K \to \alg{H}\times_L \bbA^f_L, \quad
&&\eta_2\colon \alg{H}\times_L \bbA^f_L \to \alg{G}\times_K \bbA^f_K\\
&\nu_1 \colon \Gamma \to Z(\alg{H})(\bbA^f_L), \quad
&&\nu_2 \colon \Delta \to Z(\alg{G})(\bbA^f_K). 
\end{align*}
such that
\[
	\varphi_1(\gamma) = \nu_1(\gamma) \eta_1(\gamma), \quad \varphi_2(\delta) = \nu_2(\delta) \eta_2(\delta)
\]
for all $\gamma \in \Gamma$ and $\delta \in \Delta$. The maps $\nu_1$ and $\nu_2$ have finite images.
Let $\hat{\nu_1} \colon \widehat{\Gamma} \to Z(\alg{H})(\bbA^f_L)$ denote homomorphism induced from $\nu_1$ on the profinite completion.
Since $\Gamma$ is dense in $\widehat{\Gamma}$, this proves the asserted identity 
\[q_{\alg{H}} \circ \Psi(\hat{\gamma})  = \nu_1(\hat{\gamma})\eta_1(q_{\alg{G}}(\hat{\gamma})).\]

From adelic superrigidity we know further that $[K:\bbQ] = [L : \bbQ]$ and that
$w_1, w_2$ are bijections. Let 
\[
	U = \ker(\hat{\nu}_1) \cap \ker(\hat{\nu}_2 \circ \Psi) \normal_o \widehat{\Gamma}.
\]
We define $\Gamma' = U \cap \Gamma$ and $\Delta' = \Psi(U) \cap \Delta$.
We show that the maps $w_*$, the isomorphisms $j_*$, and the morphisms $\eta_*$ are inverses of each other.

Define $w = w_1 \circ w_2$, $\eta = \eta_2 \circ \eta_1$ and $j_u = j_{1,w_2(u)} \circ j_{2,u}$. Then $\eta$ is a homomorphism of
group schemes $\alg{G}\times_K \bbA^f_K \to \alg{G} \times_K \bbA^f_K$
over the map $j\colon \bbA_K^f \to \bbA_K^f$ induced from the collection of isomorphisms $j_u$.
We will prove the following\\[1ex]
Claim: \textit{$\eta(\gamma) = \gamma$ for all $\gamma \in \Gamma'$}.\\[1ex] This will complete the proof, since the uniqueness statement in the adelic superrigidity theorem (applied to $\Gamma'$) implies that $w = \id_{V_f(K)}$, $j = \id_{\bbA_K^f}$ and $\eta = \id$. Interchanging the roles of $\alg{G}$ and $\alg{H}$, we deduce that $j_1$ and $\eta_1$ are isomorphisms.
The commutativity of the diagram is immediate from the continuity of all involved homomorphisms and the density of $\Gamma'$ in $U$.

\medskip

\textit{Proof of Claim:} Let $\gamma \in \Gamma'$. The group $\Delta'$ is dense in $\Psi(U)$, so that we can find a sequence $(\delta_n)_{n \in \bbN}$ in $\Delta'$ which converges to $\Psi(\gamma)$. Then we obtain
\begin{align*}
	\eta(\gamma) &= \eta_2(\eta_1(\gamma)) = \eta_2(q_{\alg{H}}(\Psi(\gamma)))\\ &= \lim_{n \to \infty} \eta_2(\delta_n)
	= \lim_{n \to \infty} q_{\alg{G}}(\Psi^{-1}(\delta_n)) = \gamma
\end{align*}
using that $\Psi^{-1}(\delta_n)$ converges to $\gamma$ in $U$.
\end{proof}

For future reference, let us close this section with a useful corollary.

\begin{corollary}
Let $K$ be an algebraic number field and let
$\alg{G}$ be an absolutely almost simple algebraically superrigid algebraic $K$-group.
Let $\Gamma \subseteq \alg{G}(K)$ be an arithmetic subgroup.
The kernel $\widetilde{C}$ of the canonical homomorphism 
\[
	\widehat{\Gamma} \to \alg{G}(\bbA^f_K)/Z(\alg{G})(\bbA^f_K)
\]
is a characteristic subgroup of $\widehat{\Gamma}$.
\end{corollary}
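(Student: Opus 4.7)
The plan is to apply Theorem \ref{thm:profinite-isomorphism-adelic} in the ``self-dual'' case $\alg{H} = \alg{G}$, $\Delta = \Gamma$ to an arbitrary automorphism $\Psi \in \Aut(\widehat{\Gamma})$ and show that the image of $\widetilde{C}$ under $\Psi$ coincides with $\widetilde{C}$. Characteristicness then follows because $\Psi$ was arbitrary.

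Concretely, denote by $\pi \colon \alg{G}(\bbA^f_K) \to \alg{G}(\bbA^f_K)/Z(\alg{G})(\bbA^f_K)$ the canonical projection so that $\widetilde{C} = \ker(\pi \circ q_{\alg{G}})$. Given $\Psi \in \Aut(\widehat{\Gamma})$, Theorem \ref{thm:profinite-isomorphism-adelic} produces an isomorphism $j_1$ of $\bbA^f_K$, an isomorphism $\eta_1$ of group schemes over $j_1$, and a homomorphism $\hat{\nu}_1 \colon \widehat{\Gamma} \to Z(\alg{G})(\bbA^f_K)$ (with finite image) such that
\[
	q_{\alg{G}}(\Psi(\hat\gamma)) \;=\; \hat{\nu}_1(\hat\gamma)\,\eta_1\bigl(q_{\alg{G}}(\hat\gamma)\bigr) \qquad \text{for all } \hat\gamma \in \widehat{\Gamma}.
\]

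The key observation is that an isomorphism of group schemes preserves the center; hence $\eta_1$ restricts to an isomorphism of $Z(\alg{G})\times_K \bbA^f_K$ with itself (over $j_1$) and thus induces a bijection $\overline{\eta}_1$ on the quotient $\alg{G}(\bbA^f_K)/Z(\alg{G})(\bbA^f_K)$. Since $\hat{\nu}_1$ takes values in $Z(\alg{G})(\bbA^f_K)$, reducing the displayed identity modulo $Z(\alg{G})(\bbA^f_K)$ gives
\[
	\pi\bigl(q_{\alg{G}}(\Psi(\hat\gamma))\bigr) \;=\; \overline{\eta}_1\bigl(\pi(q_{\alg{G}}(\hat\gamma))\bigr).
\]
As $\overline{\eta}_1$ is a bijection, the right-hand side vanishes exactly when $\pi(q_{\alg{G}}(\hat\gamma)) = 1$, i.e.\ when $\hat\gamma \in \widetilde{C}$. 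Therefore $\Psi(\widetilde{C}) = \widetilde{C}$.

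I do not anticipate a serious obstacle: the entire argument is a direct pull-back of the structure theorem for profinite isomorphisms via Theorem \ref{thm:profinite-isomorphism-adelic}, combined with the elementary fact that any isomorphism of algebraic groups maps center to center. The only minor point worth verifying carefully is that the twisting by $\hat{\nu}_1$ disappears after quotienting by $Z(\alg{G})(\bbA^f_K)$, which is immediate from $\hat{\nu}_1(\widehat{\Gamma}) \subseteq Z(\alg{G})(\bbA^f_K)$.
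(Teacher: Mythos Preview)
Your proof is correct and follows essentially the same route as the paper: apply Theorem~\ref{thm:profinite-isomorphism-adelic} with $\alg{H}=\alg{G}$, $\Delta=\Gamma$ to an arbitrary $\Psi\in\Aut(\widehat{\Gamma})$, then use that $\eta_1$ is a group-scheme isomorphism (hence preserves the center) and that $\hat{\nu}_1$ lands in the center. The only cosmetic difference is that you pass explicitly to the quotient and invoke bijectivity of $\overline{\eta}_1$ to get $\Psi(\widetilde{C})=\widetilde{C}$ in one stroke, whereas the paper checks the inclusion $\Psi(\widetilde{C})\subseteq\widetilde{C}$ directly and leaves the reverse inclusion implicit (via $\Psi^{-1}$).
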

\begin{proof}
Let $\Psi \in \Aut(\widehat{\Gamma})$ and let $\hat{\gamma} \in \widehat{\Gamma}$ . We apply \ref{thm:profinite-isomorphism-adelic} to $\Psi$ and obtain
\[
	q_{\alg{G}}(\Psi(\hat{\gamma})) \in \hat{\nu}_1(\hat{\gamma})\eta_1(q_{\alg{G}}(\hat{\gamma})).
\]
with $\hat{\nu}_1(\hat{\gamma}) \in Z(\alg{G})(\bbA_K^f)$.
Suppose $\hat{\gamma} \in \widetilde{C}$, then $q_{\alg{G}}(\hat{\gamma}) \in Z(\alg{G})(\bbA^f_K)$ and (since $\eta_1$ is an isomorphism) we conclude that  $\eta_1(q_{\alg{G}}(\hat{\gamma})) \in Z(\alg{G}(\bbA_K^f)) = Z(\alg{G})(\bbA^f_K)$.
\end{proof}

\section{Profinite commensurability classification} \label{sec:classification}

In this section, we will prove Theorems~\ref{thm:commensurability-classification} and~\ref{thm:profinite-commensurability-classification} and hence establish the profinite commensurability classification of arithmetic subgroups of algebraic groups with CSP.  We start by recalling Margulis arithmeticity to see how it motivates the introduction of \(\mathbf{G}\)-arithmetic pairs in Definition~\ref{def:g-arithmetic-pair}.

Let \(\K\) be either the field of real or complex numbers and let \(\mathbf{G}\) be a connected absolutely almost simple linear algebraic \(\K\)-group such that \(\mathrm{rank}_\K \mathbf{G} \ge 2\).  Then by \cite{Margulis:discrete-subgroups}*{Theorem IX.1.11, p.\,298 and (\(\ast \ast\)) on p.\,293/294}, any lattice \(\Gamma \le \mathbf{G}(\K)\) is \emph{arithmetic}:  There exists a subfield \(K \subset \mathbb{K}\) of finite degree over \(\mathbb{Q}\), a connected simply connected absolutely almost simple linear algebraic \(K\)-group \(\mathbf{H}\), and a continuous homomorphism \(\varphi \colon \mathbf{H}(\K) \rightarrow \mathbf{G}(\K)\) such that
\begin{enumerate}[(i)]
  \item \label{item:dense} \(K\) is dense in \(\mathbb{K}\),
\item \label{item:finite-kernel-cokernel} the homomorphism \(\varphi\) has finite kernel and the image of \(\varphi\) contains the unit component \(\mathbf{G}(\K)^0\),
\item \label{item:commensurable} the subgroups \(\varphi(\mathbf{H}(\mathcal{O}_K))\) and \(\Gamma\) of \(\mathbf{G}(\K)\) are commensurable,
\item \label{item:compact} all the other infinite places \(\sigma\) of \(K\) are real and define an anisotropic \(\R\)-group \(\mathbf{H} \times_\sigma \mathbb{R}\).
\end{enumerate}
Here the group \(\mathbf{H}(\mathcal{O}_K)\) of \(K\)-integral points is defined by means of a fixed \(K\)-embedding \(\mathbf{H} \subset \mathbf{GL_n}\).  Any other choice of embedding yields an (abstractly) commensurable group, so the commensurability class of \(\mathbf{H}(\mathcal{O}_K)\) is well-defined and coincides with that of \(\Gamma\) by~\eqref{item:commensurable}.  By~\eqref{item:finite-kernel-cokernel} and~\eqref{item:commensurable}, there exists a finite index subgroup \(\Lambda \subset \mathbf{H}(\mathcal{O}_K)\) which embeds as a finite index subgroup of \(\Gamma\) via \(\varphi\).  If \(K' \subset \mathbb{K}\), \(\mathbf{H}' \subset \mathbf{GL_n}\), \(\varphi'\), and \(\Lambda'\) are another choice as above, then the finite index subgroup \(\Lambda_0 = \varphi_{|\Lambda}^{-1}(\varphi(\Lambda) \cap \varphi'(\Lambda'))\) of~\(\Lambda\) embeds via \(\delta = \varphi_{|\Lambda'}^{'-1} \circ \varphi\) into a finite index subgroup of \(\mathbf{H}'(\mathcal{O}_{K'})\).  Hence \(\delta(\Lambda_0)\) is Zariski dense in \(\mathbf{H}'(K')\).  By~\eqref{item:finite-kernel-cokernel} and~\eqref{item:compact}, we have \(\mathrm{rank}_S \mathbf{H} = \mathrm{rank}_\K \mathbf{G} \ge 2\) so \(\mathbf{H}\) exhibits algebraic superrigidity as in Definition~\ref{def:superrigidity}: there exists a unique field embedding \(\sigma \colon K \rightarrow K'\) and a unique \(K'\)-epimorphism \(\eta \colon \mathbf{H} \times_\sigma K' \rightarrow \mathbf{H}'\) with the property that \(\eta\) extends \(\delta\) (up to multiplication with central elements).  Since \(\mathbf{H}\) is absolutely almost simple and \(\mathbf{H'}\) is simply connected, the epimorphism \(\eta\) is in fact a \(K'\)-isomorphism.  By symmetry, we also have a homomorphism \(K' \rightarrow K\), so \(\sigma\) is an isomorphism as well.

If \(\nu\) and \(\nu'\) denote the infinite places of \(K\) and \(K'\) corresponding to the given embeddings \(K \subset \K\) and \(K' \subset \K\), the \(K'\)-isomorphism \(\eta\) induces the \(\K\)-group isomorphism
\[ \mathbf{H}'\times_{\nu \circ \sigma^{-1}} \K \cong (\mathbf{H} \times_\sigma K') \times_{\nu \circ \sigma^{-1}} \K \cong \mathbf{H} \times_\nu \K, \]
so \(\nu \circ \sigma^{-1}\) is the unique infinite place of \(K'\) at which \(\mathbf{H}'\) is isotropic.  This shows that either \(\nu \circ \sigma^{-1} = \nu'\), hence \(K = K'\) as subfields of \(\mathbb{K}\) and \(\sigma = \mathrm{id}_K\), or potentially \(K\) and \(K'\) are complex conjugates and \(\sigma\) is complex conjugation if \(\mathbb{K} = \C\).  So the lattice \(\Gamma\) determines the subfield \(K \subset \mathbb{K}\) uniquely up to complex conjugation, justifying the notation \(K = K_\Gamma\).  Moreover, once the subfield $K\subset \mathbb{K}$ is fixed, the group \(\mathbf{H} = \mathbf{H}_\Gamma\) is uniquely determined by \(\Gamma\) up to \(K\)-isomorphism.  We are now ready to prove Theorem~\ref{thm:commensurability-classification}.  More precisely, we show the following statement.

\begin{theorem} \label{thm:inverse}
  Assigning \(\Gamma \mapsto (K_\Gamma, \mathbf{H}_\Gamma)\) defines a map from commensurability classes of lattices in \(\mathbf{G}(\K)\) to isomorphism classes of \(\mathbf{G}\)-arithmetic pairs.  It is the inverse of the map \(\Phi\) from Theorem~\ref{thm:commensurability-classification}.
\end{theorem}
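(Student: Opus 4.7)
The strategy is to observe that the paragraphs preceding the theorem already contain essentially all the required content; the proof organizes this material into three checks: (a) the pair \((K_\Gamma, \mathbf{H}_\Gamma)\) is a \(\mathbf{G}\)-arithmetic pair in the sense of Definition~\ref{def:g-arithmetic-pair}, (b) the assignment \(\Gamma \mapsto (K_\Gamma, \mathbf{H}_\Gamma)\) descends to commensurability classes and to isomorphism classes of pairs, and (c) the two maps \(\Gamma \mapsto (K_\Gamma, \mathbf{H}_\Gamma)\) and \(\Phi\) are mutually inverse.

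For (a), I would read off the defining properties of a \(\mathbf{G}\)-arithmetic pair directly from Margulis' data \((K, \mathbf{H}, \varphi)\): density of \(K\) in \(\K\) is property~\eqref{item:dense}, finiteness of \([K : \Q]\) is part of the arithmeticity statement, and anisotropy at the other infinite places is property~\eqref{item:compact}. The \(\K\)-isogeny \(\mathbf{H} \times_K \K \to \mathbf{G}\) required in Definition~\ref{def:g-arithmetic-pair} arises from~\eqref{item:finite-kernel-cokernel}, since a continuous homomorphism between the \(\K\)-points of two absolutely almost simple \(\K\)-groups whose kernel is finite and whose image contains the identity component is in fact induced by a \(\K\)-isogeny. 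For (b), the key input is the uniqueness discussion just carried out above the theorem: given two arithmeticity data \((K, \mathbf{H}, \varphi)\) and \((K', \mathbf{H}', \varphi')\) for commensurable lattices, pass to a common finite-index subgroup \(\Lambda_0\); then \(\delta = \varphi_{|\Lambda'}^{'-1} \circ \varphi\) has Zariski dense image in \(\mathbf{H}'(K')\), and algebraic superrigidity of \(\mathbf{H}\) yields a unique field embedding \(\sigma \colon K \to K'\) and a unique \(K'\)-isomorphism \(\eta \colon \mathbf{H} \times_\sigma K' \to \mathbf{H}'\). Matching the unique isotropic infinite places on both sides forces \(\sigma = \mathrm{id}_K\) (up to complex conjugation when \(\K = \C\)), so \(K = K'\) as subfields of \(\K\) and \(\mathbf{H} \cong \mathbf{H}'\) over \(K\).

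For (c), both compositions are immediate. Starting from \(\Gamma\) and forming \(\Gamma_{(K_\Gamma, \mathbf{H}_\Gamma)}\), arithmeticity property~\eqref{item:commensurable} says precisely that \(\varphi(\mathbf{H}(\mathcal{O}_K))\) is commensurable with \(\Gamma\), and \(\Gamma_{(K_\Gamma, \mathbf{H}_\Gamma)}\) is by definition the commensurability class of \(\varphi(\mathbf{H}(\mathcal{O}_K))\). Conversely, starting from a \(\mathbf{G}\)-arithmetic pair \((K, \mathbf{H})\), the \(\K\)-isogeny of Definition~\ref{def:g-arithmetic-pair} supplies the homomorphism \(\varphi\) that realizes properties~\eqref{item:dense}--\eqref{item:compact} for \(\Gamma_{(K, \mathbf{H})}\), whence step~(b) identifies \((K_{\Gamma_{(K, \mathbf{H})}}, \mathbf{H}_{\Gamma_{(K, \mathbf{H})}})\) with \((K, \mathbf{H})\) up to isomorphism of pairs.

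I do not expect a single hard technical obstacle: all genuine input (Margulis arithmeticity plus algebraic superrigidity for \(\mathbf{H}\)) has already been invoked above. The only point requiring care is the complex conjugation ambiguity when \(\K = \C\), where \(K_\Gamma\) is canonical only up to complex conjugation; this ambiguity is already built into the notation and into the statement of Theorem~\ref{thm:commensurability-classification} and must simply be tracked consistently throughout the argument.
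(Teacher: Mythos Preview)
Your proposal is correct and follows essentially the same route as the paper's proof: verify that \((K_\Gamma,\mathbf{H}_\Gamma)\) is a \(\mathbf{G}\)-arithmetic pair, use the superrigidity-based uniqueness discussion to pass to commensurability classes, and check both compositions via~\eqref{item:commensurable} and the \(\K\)-isogeny from Definition~\ref{def:g-arithmetic-pair}. The only place the paper is more explicit is in step~(a), where it produces the \(\K\)-isogeny by first noting that \(\varphi\) induces a \(\K\)-Lie algebra isomorphism \(\mathrm{Lie}(\mathbf{H}_\Gamma)\cong\mathrm{Lie}(\mathbf{G})\) and then invoking \cite{Margulis:discrete-subgroups}*{Proposition~I.1.4.13} to integrate it---your assertion that such a continuous homomorphism is induced by a \(\K\)-isogeny is correct but would benefit from this justification.
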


\label{page:g-arithmetic-pairs-iso}  Here, two \(\mathbf{G}\)-arithmetic pairs \((K_1, \mathbf{H_1})\) and \((K_2, \mathbf{H_2})\) are \emph{isomorphic} if \(K_1 = K_2\) (or if \(K_1\) is the complex conjugate of \(K_2\)) and if \(\mathbf{H_1}\) is \(K_1\)-isomorphic to \(\mathbf{H_2}\) (or to \(\mathbf{H_2} \times_\sigma K_1\) where \(\sigma\) is complex conjugation).

\begin{proof}
  To see well-definedness of the assignment,  it remains to show that \(\mathbf{H}_\Gamma\) is \(\K\)-isogenous to \(\mathbf{G}\).  This follows from~\eqref{item:finite-kernel-cokernel} because the homomorphism \(\varphi\) induces a \(\K\)-Lie algebra isomorphism \(\beta \colon \mathrm{Lie}(\mathbf{H}_\Gamma) \xrightarrow{\cong} \mathrm{Lie}(\mathbf{G})\) which by~\cite{Margulis:discrete-subgroups}*{Proposition~I.1.4.13} is the differential of a \(\K\)-isogeny \(\pi \colon \mathbf{H}_\Gamma \rightarrow \mathbf{G}\).

If two lattices \(\Gamma, \Lambda \le \mathbf{G}(\K)\) are commensurable, a straightforward refinement of the above discussion gives that the \(\mathbf{G}\)-arithmetic pairs \((K_\Gamma, \mathbf{H}_\Gamma)\) and \((K_\Lambda, \mathbf{H}_\Lambda)\) are isomorphic.  Hence the assignment descends to a map on commensurability classes which is a right inverse to \(\Phi\):  \(\Phi(K_\Gamma, \mathbf{H}_\Gamma)\) is commensurable with \(\Gamma\) by~\eqref{item:commensurable}.  But it is also a left inverse of \(\Phi\): for a given pair \((K, \mathbf{H})\), let \(\eta \colon \mathbf{H} \rightarrow \mathbf{G}\) be a \(\K\)-isogeny .  Then \(\varphi = \eta_\K\) is a continuous homomorphism \(\varphi \colon \mathbf{H}(\K) \rightarrow \mathbf{G}(\K)\) with finite kernel whose image contains \(\mathbf{G}(\K)^0\).  It embeds a finite index subgroup of \(\mathbf{H}(\mathcal{O}_K)\) as a lattice \(\Gamma = \Gamma_{(K, \mathbf{H})}\) of \(\mathbf{G}(\K)\).  By the uniqueness discussion above the theorem, we have \((K_\Gamma, \mathbf{H}_\Gamma) \cong (K, \mathbf{H})\). 
\end{proof}

The following theorem reveals that the necessary conditions on a number field \(K\) to occur in a \(\mathbf{G}\)-arithmetic pair are actually also sufficient.  Hence the Lie group \(\mathbf{G}(\K)\) always comes with a multitude of distinct commensurability classes of lattices.

\begin{theorem} \label{thm:existence-of-G-arithmetic-pairs}
  Let \(K \subset \K\) be any dense subfield with \([K : \Q] < \infty\) and such that all other infinite places are real.  Then there exists a \(\mathbf{G}\)-arithmetic pair \((K, \mathbf{H})\).
\end{theorem}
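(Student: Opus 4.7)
The plan is to produce $\mathbf{H}$ by prescribing local forms at every archimedean place of $K$ and invoking a classical global existence theorem. Write $v_0$ for the archimedean place of $K$ given by the inclusion $K\subset\K$ (so $K_{v_0}=\K$ by density). It suffices to work with the simply connected cover $\mathbf{G}^{\mathrm{sc}}$ of $\mathbf{G}$, which is $\K$-isogenous to $\mathbf{G}$: any simply connected absolutely almost simple $K$-group $\mathbf{H}$ with $\mathbf{H}\times_K K_{v_0}\cong \mathbf{G}^{\mathrm{sc}}$ is automatically $\K$-isogenous to $\mathbf{G}$ at $v_0$.

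I would next prescribe the desired local forms. At $v_0$ take $\mathbf{G}^{\mathrm{sc}}$ itself. At every other archimedean place $v$ of $K$, which is real by hypothesis, take the compact real form of the simply connected group of the same absolute Cartan--Killing type $\Delta$ as $\mathbf{G}^{\mathrm{sc}}$; this compact form exists by Cartan's classification of real simple Lie algebras and is anisotropic because its group of real points is compact. Note that the hypothesis ``all other infinite places are real'' is essential here, since a complex almost simple group is never compact and hence never anisotropic.

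I would then invoke the existence theorem of Borel--Harder (\emph{Existence of discrete cocompact subgroups of reductive groups over local fields}, Duke Math.\ J.\ 1978, Theorem~1.7): given a finite set $S$ of places of a number field $K$ and, for each $v\in S$, a simply connected absolutely almost simple $K_v$-group of a fixed absolute type, there exists a simply connected absolutely almost simple $K$-group realizing these local forms, provided the local outer classes arise as the localizations of a single global homomorphism $\mathrm{Gal}(\bar K/K)\to\Out(\Delta)$. In our setting, the outer class at each real place of $K$ is determined by whether the form there is inner or outer relative to the split form (dictated by $\mathbf{G}^{\mathrm{sc}}$ at $v_0$ and by the compact form at all other real places). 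The required global Galois homomorphism is realized by an auxiliary Galois extension $E/K$ of degree equal to the order of the relevant subgroup of $\Out(\Delta)$, with prescribed splitting behavior at the archimedean places of $K$ (imaginary precisely where the outer class must be nontrivial, totally split otherwise). Such extensions exist by standard number theory: weak approximation at real places handles the generic quadratic case, and Grunwald--Wang or an explicit sextic construction handles the $S_3$ case in type $D_4$.

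The main obstacle is the global assembly step via Borel--Harder together with the accompanying outer-form bookkeeping, i.e.\ verifying that the individually prescribed local outer classes at different infinite places can be glued into a single global $H^1(K,\Out(\Delta))$-class realized by an auxiliary field extension with the right archimedean splitting. Everything else is either classical Lie theory (existence of compact real forms) or a cosmetic reduction (passage to the simply connected cover).
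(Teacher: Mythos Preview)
Your approach is correct and takes a genuinely different route from the paper's. The paper proceeds by explicit Lie-theoretic construction: for $\K=\C$ it uses a Chevalley basis adapted to a compact real form to produce a $\Q$-structure on $\mathfrak{u}$ with integral structure constants, then extends scalars to $K$; for $\K=\R$ it invokes Borel's 1963 result (Proposition~3.8 of \emph{Compact Clifford--Klein forms of symmetric spaces}) constructing a $K$-form of $\mathfrak{g}$ all of whose Galois conjugates are compact. In each case one then takes the universal cover of the identity component of the automorphism group. This is elementary and self-contained, relying only on classical Lie theory and structure constants, and predates the Galois-cohomological machinery you invoke.

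Your route via Borel--Harder is more conceptual and uniform once one is willing to import the local--global existence theorem, but it shifts the work into the outer-type compatibility that you rightly flag as the main obstacle. That step is genuinely needed (for instance in type $A_n$, $n\ge 2$, the compact real form is outer while the form at $v_0$ may well be inner), and your sketch via an auxiliary quadratic or $S_3$-extension with prescribed archimedean behaviour is sound; but making it precise, especially in type $D_4$, is not obviously shorter than the paper's direct argument. Two minor points: the Borel--Harder paper is in \emph{J.~Reine Angew.\ Math.}~298 (1978), not Duke; and the relevant existence statement there is usually cited as Theorem~B rather than~1.7.
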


\begin{proof}
  If \(\K = \C\), let \(\mathfrak{g}\) be the simple complex Lie algebra of \(\mathbf{G}\).  We fix a compact real form \(\mathfrak{u} \subset \mathfrak{g}\).  Starting with a Chevalley basis of \(\mathfrak{g}\) adapted to \(\mathfrak{u}\), one easily obtains a basis of \(\mathfrak{u}\) whose structure constants are rational integers~\cite{Borel:Clifford}*{Equation~(23)}.  The \(\Q\)-span of this basis thus defines a \(\Q\)-form \(\mathfrak{u}_\Q\) of \(\mathfrak{u}\), hence the connected component of the automorphism group \(\mathrm{Aut}_\Q(\mathfrak{u}_\Q)^0\) is a connected absolutely simple \mbox{\(\Q\)-group} whose Lie algebra is \(\C\)-isomorphic to \(\mathfrak{g}\).  Passing to the universal covering group and extending scalars, we thus obtain a connected simply connected absolutely almost simple \(K\)-group \(\mathbf{H}\) which is \(\C\)-isogenous to \(\mathbf{G}\) and \(\R\)-anisotropic at all other infinite places of \(K\).
  
If \(\K = \R\), then \(K \subset \R\) is totally real and Borel constructs in~\cite{Borel:Clifford}*{Proposition~3.8} a \(K\)-form \(\mathfrak{g}_K\) of the simple real Lie algebra \(\mathfrak{g}\) of \(\mathbf{G}\) such that all conjugates \({}^\nu \mathfrak{g}_K\) are \(\R\)-isomorphic to a compact form of \(\mathfrak{g} \otimes \C\).  Again, the universal covering of the unit component of the automorphism group of \(\mathfrak{g}_K\) is a \(K\)-group \(\mathbf{H}\) as required.  Note that Borel excludes the case \(K = \Q\) in his statement only for a later purpose.  In fact, \cite{Borel:Clifford}*{Proposition~3.7} shows that there exists a \(\Q\)-form of \(\mathfrak{g}\) which is all we need in this case.
\end{proof}

It is easy to see that commensurable groups are profinitely commensurable~\cite{Kammeyer:commensurability}*{Proposition~17}.  We shall next see that the corresponding weakening of equivalence of \(\mathbf{G}\)-arithmetic pairs is captured in the following definition.

\begin{definition} \label{def:locally-isomorphic}
  Two \(\mathbf{G}\)-arithmetic pairs \((K_1, \mathbf{H_1})\) and \((K_2, \mathbf{H_2})\) are called \emph{locally isomorphic} if there exists an isomorphism \(j \colon \mathbb{A}^f_{K_2} \xrightarrow{\cong} \mathbb{A}^f_{K_1}\) of topological rings and a \(j\)-group scheme isomorphism
  \[ \mathbf{H_1} \times_{K_1} \mathbb{A}^f_{K_1} \xrightarrow{\ \cong \ } \mathbf{H_2} \times_{K_2} \mathbb{A}^f_{K_2}. \]
\end{definition}

Moreover, we call two number fields \emph{locally isomorphic} if there exists an isomorphism of the finite adele rings as in the definition.

\begin{theorem} \label{thm:profinnitely-commensurable-iff-locally-isomorphic}
Suppose \(\mathbf{G}\) has CSP and let \(\Gamma, \Lambda \le \mathbf{G}(\K)\) be lattices.  Then the following are equivalent.
  \begin{enumerate}[(i)]
  \item \label{item:profinitely-commensurable} The lattice \(\Gamma\) is profinitely commensurable with \(\Lambda\).
  \item \label{item:local-isomorphism} The \(\mathbf{G}\)-arithmetic pair \((K_\Gamma, \mathbf{H}_\Gamma)\) is locally isomorphic to \((K_\Lambda, \mathbf{H}_\Lambda)\).
\end{enumerate}
\end{theorem}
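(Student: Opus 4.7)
The plan is to treat the two implications separately. For \eqref{item:local-isomorphism} $\Rightarrow$ \eqref{item:profinitely-commensurable} I would transfer the adelic isomorphism to the congruence completions and then lift it to the profinite completions, exploiting that CSP makes the congruence kernels finite. For \eqref{item:profinitely-commensurable} $\Rightarrow$ \eqref{item:local-isomorphism} I would pass to finite-index subgroups with isomorphic profinite completions and invoke Theorem~\ref{thm:profinite-isomorphism-adelic}, whose conclusion is literally the data defining a local isomorphism of $\mathbf{G}$-arithmetic pairs.

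For \eqref{item:local-isomorphism} $\Rightarrow$ \eqref{item:profinitely-commensurable}, the local isomorphism provides, after passage to finite-adelic rational points, a topological group isomorphism $\eta_1 \colon \mathbf{H}_\Gamma(\bbA^f_{K_\Gamma}) \xrightarrow{\cong} \mathbf{H}_\Lambda(\bbA^f_{K_\Lambda})$ covering the topological ring isomorphism $j$ of Definition~\ref{def:locally-isomorphic}. By strong approximation the congruence completions $\overline{\Gamma}$ and $\overline{\Lambda}$ are open subgroups of the respective adelic groups, so the intersection $W = \eta_1(\overline{\Gamma}) \cap \overline{\Lambda}$ is open in both. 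Because CSP makes the congruence kernels $\ker q_{\mathbf{H}_\Gamma}$ and $\ker q_{\mathbf{H}_\Lambda}$ finite, I would next choose open subgroups $U_\Gamma \normal_o \widehat{\Gamma}$ and $U_\Lambda \normal_o \widehat{\Lambda}$ meeting their respective kernels trivially; on these the congruence projections restrict to topological isomorphisms onto open subgroups of $\overline{\Gamma}$ and $\overline{\Lambda}$. Restricting further to the preimages of $W$ on each side then produces open subgroups of $\widehat{\Gamma}$ and $\widehat{\Lambda}$ that are both identified via $\eta_1$ with a common open subgroup of $W$, establishing profinite commensurability.

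For \eqref{item:profinitely-commensurable} $\Rightarrow$ \eqref{item:local-isomorphism}, profinite commensurability supplies open subgroups $U$ of $\widehat{\Gamma}$ and $U'$ of $\widehat{\Lambda}$ together with an isomorphism $\Psi \colon U \xrightarrow{\cong} U'$. By residual finiteness, $U$ and $U'$ coincide with the profinite completions of the finite-index arithmetic subgroups $\Gamma' = \Gamma \cap U \subseteq \mathbf{H}_\Gamma(K_\Gamma)$ and $\Lambda' = \Lambda \cap U' \subseteq \mathbf{H}_\Lambda(K_\Lambda)$, which inherit the $\mathbf{G}$-arithmetic pairs of the ambient lattices. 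Theorem~\ref{thm:profinite-isomorphism-adelic} applied to $\Psi$ then outputs exactly the topological ring isomorphism and $j_1$-group scheme isomorphism demanded by Definition~\ref{def:locally-isomorphic}.

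The main obstacle is the bookkeeping in the first implication: one must ensure the open subgroups produced inside $\widehat{\Gamma}$ and $\widehat{\Lambda}$ are genuinely isomorphic, not merely isogenous modulo the congruence kernels. The finiteness of those kernels, guaranteed by CSP, is precisely what permits this argument and constitutes the only place in the proof where the CSP hypothesis enters. The reverse implication, by contrast, is a formal consequence of Theorem~\ref{thm:profinite-isomorphism-adelic}, so the technical work is entirely absorbed into the adelic superrigidity theorem.
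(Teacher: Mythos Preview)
Your proposal is correct and follows essentially the same route as the paper: for \eqref{item:local-isomorphism}$\Rightarrow$\eqref{item:profinitely-commensurable} both arguments use CSP to find open subgroups of the profinite completions on which the congruence projection is injective, then match them inside the adelic groups via the given isomorphism and strong approximation; for \eqref{item:profinitely-commensurable}$\Rightarrow$\eqref{item:local-isomorphism} both simply invoke Theorem~\ref{thm:profinite-isomorphism-adelic} on profinitely isomorphic finite-index subgroups. The only imprecision is that the two open subgroups of $W$ you produce need not literally coincide, but one further intersection fixes this immediately.
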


\begin{proof}
  \emph{\eqref{item:local-isomorphism} implies \eqref{item:profinitely-commensurable}:} As in~\cite{Kammeyer-Kionke:rigidity}*{(A.2)}, we have a short exact sequence
  \[ 1 \longrightarrow C(K_\Gamma, \mathbf{H}_\Gamma) \longrightarrow \widehat{\mathbf{H}_\Gamma(\mathcal{O}_{K_\Gamma})} \xrightarrow{\ \chi \ } \prod_{v \in V^f(K_\Gamma)} \mathbf{H}_\Gamma(\mathcal{O}_v) \longrightarrow 1 \]
  of profinite groups and similarly for \((K_\Lambda, \mathbf{H}_\Lambda)\).  Here \(\mathcal{O}_v\) is the valuation ring of the local completion of \(K_\Gamma\) at the finite place \(v \in V^f(K_\Gamma)\) and \(\chi\) is the canonical extension of the diagonal embedding of \(\mathbf{H}_\Gamma(\mathcal{O}_{K_\Gamma})\) to the profinite completion.  The kernel \(C(K_\Gamma, \mathbf{H}_\Gamma)\) of \(\chi\) is just the \emph{congruence kernel} which is finite by assumption.  Hence we find a finite index subgroup \(\Gamma_0 \subset \mathbf{H}_\Gamma(\mathcal{O}_{K_\Gamma})\) which intersects \(C(K_\Gamma, \mathbf{H}_\Gamma)\) trivially and which is also isomorphic to a finite index subgroup of \(\Gamma\).  Strong approximation, which holds true for \(\mathbf{H}_\Gamma\) by \cite{Platonov-Rapinchuk:algebraic-groups}*{Theorem~7.12, p.\,427}, implies that \(\chi\) embeds the closure of \(\Gamma_0\) in \(\widehat{\mathbf{H}_\Gamma(\mathcal{O}_{K_\Gamma})}\) as an open subgroup of \(\prod_{v \in V^f(K_\Gamma)} \mathbf{H}_\Gamma(\mathcal{O}_v)\) and hence also of \(\mathbf{H}_\Gamma(\mathbb{A}^f_{K_\Gamma})\).  The latter is isomorphic to \(\mathbf{H}_\Lambda(\mathbb{A}^f_{K_\Lambda})\) by~\eqref{item:local-isomorphism}.   Applying the argument backwards for \(\Lambda\) and \((K_\Lambda, \mathbf{H}_\Lambda)\), we obtain that \(\Gamma\) is profinitely commensurable with \(\Lambda\).

  \emph{\eqref{item:profinitely-commensurable} implies \eqref{item:local-isomorphism}:} this is an immediate consequence of adelic superrigidity, Theorem~\ref{thm:profinite-isomorphism-adelic}, applied to profinitely isomorphic finite index subgroups \(\Gamma_0 \subset \Gamma\) and \(\Lambda_0 \subset \Lambda\).
\end{proof}

\begin{proof}[Proof of Theorem~\ref{thm:profinite-commensurability-classification}.]
Theorems~\ref{thm:commensurability-classification} and the ``\eqref{item:local-isomorphism} implies \eqref{item:profinitely-commensurable}''-part of Theorem~\ref{thm:profinnitely-commensurable-iff-locally-isomorphic} show that \(\Phi\) descends to a well-defined surjection on local isomorphism classes of \(\mathbf{G}\)-arithmetic pairs.  The ``\eqref{item:profinitely-commensurable} implies \eqref{item:local-isomorphism}''-part of Theorem~\ref{thm:profinnitely-commensurable-iff-locally-isomorphic} combined with Theorem~\ref{thm:inverse} show that this map is injective.
\end{proof}

\section{Consequences on profinite rigidity} \label{sec:profinite-rigidity}

In this section we prove the consequences of the profinite commensurability classification on profinite rigidity that we stated in the introduction, starting with Theorem~\ref{thm:exceptional-cases}.  As we will explain in a moment, it is a consequence of noncommutative Galois cohomology computations by M.\,Kneser that for the exceptional Cartan--Killing types under consideration, the \(K\)-form of a group is determined by the \(K_v\)-forms at the infinite places \(v\) of~\(K\).  This has the striking consequence that for these types, both the commensurability and the profinite commensurability classification of lattices depends on the number field \(K_{\Gamma} \subset \K\) only. 

\begin{proof}[Proof of Theorem~\ref{thm:exceptional-cases}.]
  By Theorem~\ref{thm:commensurability-classification} (and Theorem~\ref{thm:inverse}), commensurability classes of lattices \(\Gamma \le \mathbf{G}(\R)\) are in 1-1--correspondence with isomorphism classes \((K_\Gamma, \mathbf{H}_\Gamma)\) of \(\mathbf{G}\)-arithmetic pairs.  By Theorem~\ref{thm:existence-of-G-arithmetic-pairs}, for every totally real number subfield \(K \subset \mathbb{R}\), there exists a \(\mathbf{G}\)-arithmetic pair \((K, \mathbf{H})\).  For assertion~\eqref{item:comm-subfields} of the theorem, it thus remains to show that given \(K \subset \mathbb{R}\), the isomorphism type of the \(\mathbf{G}\)-arithmetic pair \((K, \mathbf{H})\) is unique.

  Since two such groups \(\mathbf{H}\), \(\mathbf{H}'\) are simply connected and \(\R\)-isogenous to~\(\mathbf{G}\) (at the distinguished infinite place), the groups are \(\R\)-isomorphic and hence in particular \(\C\)-isomorphic.  This shows \(\mathbf{H}\) and \(\mathbf{H}'\) are \(K\)-forms of one another.  The various \(K\)-forms of \(\mathbf{H}\) are classified up to \(K\)-isomorphism by the first noncommutative Galois cohomology set \(H^1(K, \Aut_{\overline{K}}(\mathbf{H}))\).  The Dynkin diagram of \(\mathbf{G}\) and hence of \(\mathbf{H}\) has no symmetries, which has the effect that \(\Aut_{\overline{K}}(\mathbf{H}) \cong \overline{\mathbf{H}}\) is isomorphic to the adjoint group of \(\mathbf{H}\).  Moreover, for the types \(E_8\), \(F_4\), and \(G_2\), weight lattice and root lattice coincide which has the consequence that \(\mathbf{H}\) has trivial center~\cite{Platonov-Rapinchuk:algebraic-groups}*{table on p.\,64}, so \(\overline{\mathbf{H}} = \mathbf{H}\).  One of the main theorems in Galois cohomology of algebraic groups states that the diagonal map
  \[ \theta \colon H^1(K, \mathbf{H}) \longrightarrow \prod_{v \in V_\infty(K)} H^1(K_v, \mathbf{H}) \]
  is bijective whenever \(\mathbf{H}\) is a simply connected group~\cite{Platonov-Rapinchuk:algebraic-groups}*{Theorem~6.6}.  For a \(\overline{K}\)-isomorphism \(f \colon \mathbf{H} \xrightarrow{\cong} \mathbf{H}'\), let \(a_\sigma = f^{-1} f^\sigma\) be the \(1\)-cocycle on \(\mathrm{Gal}(\overline{K}/K)\) with values in \(\Aut_{\overline{K}}(\mathbf{H}) \cong \mathbf{H}\).  Since \(\R\)-anisotropic forms are unique, the condition that both \((K, \mathbf{H})\) and \((K, \mathbf{H}')\) are \(\mathbf{G}\)-arithmetic pairs precisely says that \(a_\sigma\) maps to the distinguished element in \(H^1(K_v, \mathbf{H})\) for all \(v \in V_\infty(K)\).  Since \(\theta\) is a bijection of pointed sets, this implies that \(\mathbf{H}\) and \(\mathbf{H}'\) are \(K\)-isomorphic.

    The group \(\mathbf{G}\) has CSP by~\cite{Platonov-Rapinchuk:algebraic-groups}*{Theorem~9.24}.  Hence by Theorem~\ref{thm:profinite-commensurability-classification}, profinite commmensurability classes of lattices in \(\mathbf{G}(\R)\) correspond to local equivalence classes of \(\mathbf{G}\)-arithmetic pairs.  For assertion~\eqref{item:prof-comm-adelic} of the theorem, it thus remains to show that for any two \(\mathbf{G}\)-arithmetic pairs \((K, \mathbf{H})\) and \((K', \mathbf{H}')\) such that there exists an isomorphism \(j \colon \mathbb{A}^f_{K'} \xrightarrow{\cong} \mathbb{A}^f_{K}\), we have a \(j\)-group scheme isomorphism
  \[ \mathbf{H} \times_K \mathbb{A}^f_K \xrightarrow{\ \cong \ } \mathbf{H}' \times_{K'} \mathbb{A}^f_{K'}. \]
  The isomorphism~\(j\) yields a bijection \(v \mapsto v'\) between the finite places of \(K\) and \(K'\) as well as isomorphisms of the corresponding completions; cf.~\cite[(2.5), p.~238]{Klingen:similarities}.  Similarly as above, for every finite place \(v\) of \(K\), the \(K_v\)-forms of \(\mathbf{H}\) are classified by the first Galois cohomology \(H^1(K_v, \mathbf{H})\) which is trivial by~\cite{Kneser:galois}*{Satz~1}.  Thus there are isomorphisms \(f_v \colon \mathbf{H} \cong \mathbf{H}'\) defined over \(K_v \cong K_{v'}\) for all finite places \(v\). We would like to assemble these isomorphisms to an isomorphism over the rings of finite adeles. To this end we need to choose the isomorphisms carefully.
  
  We fix integral models for $\mathbf{H}$ and $\mathbf{H}'$ over the rings of algebraic integers $\mathcal{O}_K$ and $\mathcal{O}_{K'}$ respectively. There is a finite set of prime numbers $S \subseteq \bbZ$ such that over
   $S^{-1}\mathcal{O}_K$ and $S^{-1}\mathcal{O}_{K'}$ the group schemes $\mathbf{H}$ and $\mathbf{H}'$ are smooth and semi-simple of same type everywhere locally (in the sense of SGA 3); see \cite[Exp.~XIX, Thm.~2.5]{SGA3-3}.
Groups of the exceptional types considered here are split over all finite fields and hence for each finite place $v$ of $K$ which does not lie over a prime in $S$ we can find isomorphisms
\[
	\mathbf{H} \times_{\mathcal{O}_K} k_v \xrightarrow{\ \cong \ }  \mathbf{H}' \times_{\mathcal{O}_{K'}} {k'}_{v'}
\]
defined over the induced isomorphism $k'_{v'} \to k_{v}$ between the finite residue fields.
By \cite[Exp.~XXIV, Cor.~1.12]{SGA3-3} these isomorphisms lift to isomorphisms
\[
	f^0_v\colon \mathbf{H} \times_{\mathcal{O}_K} \mathcal{O}_{K,v} \xrightarrow{\ \cong \ }  \mathbf{H}' \times_{\mathcal{O}_{K'}}\mathcal{O}_{K',v'}
\]
over the associated local rings and via base change provide isomorphisms $f_v \colon \mathbf{H} \times_{\mathcal{O}_K} K_v \xrightarrow{\ \cong \ }  \mathbf{H}' \times_{\mathcal{O}_{K'}} K'_{v'}$. For every finite place $v$ which divides a prime in $S$, we choose an arbitrary isomorphism
$f_v$.
  
    Assembling the isomorphisms $(f_v)_{v\in V_f}$, Lemma~\ref{lemma:tuple-yields-morphism} yields an isomorphism \(f \colon \mathbf{H} \xrightarrow{\cong} \mathbf{H}'\) over \(B = \prod_{v \nmid \infty} K_v\).  Since \(\mathbf{H}(K)\) is Zariski dense and by construction \(f(\mathbf{H}(K)) \subset \mathbf{H}'(\mathbb{A}^f_{K'})\), the isomorphism \(f\) is defined over \(\mathbb{A}^f_K\) by Lemma~\ref{lemma:defined-over-subalgebra}.
\end{proof}

\begin{example} \label{example:non-cocompact-q}
For \(\mathbf{G}\) as in the theorem, the commensurability class of all lattices \(\Gamma\) with \(K_\Gamma = \Q\) is precisely the set of all non-cocompact lattices in \(\mathbf{G}(\R)\).  Indeed,  the group \(\mathbf{H}_\Gamma\) in a \(\mathbf{G}\)-arithmetic pair \((K_\Gamma, \mathbf{H}_\Gamma)\) of a non-cocompact lattice \(\Gamma \le \mathbf{G}(\R)\) is \(K_\Gamma\)-isotropic.  Hence~\eqref{item:compact} in the definition of arithmetic lattices implies that \(K_\Gamma\) has only one infinite place which is real because \(\mathbb{K} = \R\).  Hence \(K_\Gamma = \Q\).  Conversely, for \(\mathbf{G}\) of the above exceptional types, we just saw that the \(K_\Gamma\)-group \(\mathbf{H}_\Gamma\) in a \(\mathbf{G}\)-arithmetic pair is unique up to \(K_\Gamma\)-isomorphism.  Hence if \(K_\Gamma = \Q\), then \(\mathbf{H}_\Gamma\) is the unique simply connected \(\Q\)-form of \(\mathbf{G}\) with \(\mathrm{rank}_\Q \mathbf{H} = \mathrm{rank}_\R \mathbf{G}\).  This \(\Q\)-form can for instance be constructed from a basis of the Lie algebra \(\mathfrak{g}\) of \(\mathbf{G}(\R)\) with rational structure constants which is an extension of a basis of a maximal abelian subalgebra \(\mathfrak{a} \subset \mathfrak{p}\) in a Cartan decomposition \(\mathfrak{g} \cong \mathfrak{k} \oplus \mathfrak{p}\)~\cite{Kammeyer:rational}*{Theorem~4.1}.  So \(\mathbf{H}_\Gamma\) is \(\Q\)-isotropic, hence \(\Gamma\) is non-cocompact.
\end{example}

The lack of Dynkin diagram symmetries was an important point in the above proof of Theorem~\ref{thm:exceptional-cases}.  It is also the reason why we have to exclude types \(A_n\), \(D_n\), and \(E_6\) from the conclusion of Theorem~\ref{thm:rational-field-solitary} because we need a trivial outer automorphism group to guarantee that the Hasse principle does not fail.

\begin{proof}[Proof of Theorem~\ref{thm:rational-field-solitary}]
Let \((K, \mathbf{H}')\) be a \(\mathbf{G}\)-arithmetic pair such that \(\Gamma_{(K, \mathbf{H}')}\) is profinitely commensurable with \(\Gamma_{(\Q, \mathbf{H})}\).  Then finite index subgroups are profinitely isomorphic so Theorem~\ref{thm:profinite-isomorphism-adelic} shows that the two \(\mathbf{G}\)-arithmetic pairs \((\Q, \mathbf{H})\) and \((K, \mathbf{H}')\) are locally isomorphic.  We have to show that they are actually isomorphic.  Since the number field \(\Q\) is uniquely determined by its finite adele ring, we have \(K = \Q\).  Moreover, the automorphism group of \(\mathbb{A}_\Q^f\) is trivial, so we have an isomorphism
  \begin{equation} \label{eq:local-equivalence} \mathbf{H} \times_\Q \mathbb{A}^f_\Q \xrightarrow{\ \cong \ } \mathbf{H}' \times_\Q \mathbb{A}^f_\Q \end{equation}
  over \(\mathbb{A}_\Q^f\) from which we need to construct a \(\Q\)-isomorphism \(\mathbf{H} \xrightarrow{\cong} \mathbf{H}'\).  Condition~\eqref{item:dense} in the definition of arithmetic groups shows that \(\mathbb{K} = \R\).  Since \(\mathbf{H}\) and \(\mathbf{H}'\) are simply connected and \(\mathbb{R}\)-isogenous to \(\mathbf{G}\), they are \(\mathbb{R}\)-isomorphic, in particular \(\C\)-isomorphic, and therefore they are \(\Q\)-forms of one another.  The \(\Q\)-forms of \(\mathbf{H}\) are classified up to \(\Q\)-isomorphism by the first noncommutative Galois cohomology set \(H^1(\Q, \Aut_{\overline{\Q}}(\mathbf{H}))\).  Since \(\mathbf{H}\) is neither of type \(A_n\), nor \(D_n\), nor \(E_6\), the outer automorphism group of \(\mathbf{H}\) is trivial, hence \(\Aut_{\overline{\Q}} (\mathbf{H})\) can be identified with the adjoint group \(\overline{\mathbf{H}}\) of \(\mathbf{H}\).  For any \(\overline{\Q}\)-isomorphism \(f \colon \mathbf{H} \xrightarrow{\cong} \mathbf{H}'\), let \(a_\sigma = f^{-1} f^\sigma\) be the \(1\)-cocycle on \(\mathrm{Gal}(\overline{\Q}/\Q)\) with values in \(\Aut_{\overline{\Q}}(\mathbf{H}) = \overline{\mathbf{H}}\).  The corresponding class \([a] \in H^1(\Q, \overline{\mathbf{H}})\) is independent of the choice of \(f\).  The local equivalence in \eqref{eq:local-equivalence} shows that \(\mathbf{H}\) and \(\mathbf{H}'\) are isomorphic at all finite places of \(\Q\).  But as we discussed above, \(\mathbf{H}\) and \(\mathbf{H}'\) are also isomorphic at the infinite place of \(\Q\).  Therefore \([a]\) maps to the distinguished trivial class in \(H^1(K_v, \overline{\mathbf{H}})\) for all places \(v\) of \(\Q\).  Since the \emph{Hasse principle} applies to adjoint groups~\cite{Platonov-Rapinchuk:algebraic-groups}*{Theorem~6.22, p.\,336}, meaning the diagonal map of pointed sets
  \[ H^1(\Q, \overline{\mathbf{H}}) \longrightarrow \prod_v H^1 (\Q_v, \overline{\mathbf{H}}) \]
  is injective, we conclude that \(\alpha\) is the trivial class, whence \(\mathbf{H}\) and \(\mathbf{H}'\) are \(\Q\)-isomorphic.

  Finally we observe (as in Example~\ref{example:non-cocompact-q}), that every non-cocompact lattice \(\Gamma \subset \mathbf{G}(\R)\) has \(K_\Gamma = \Q\), so the second assertion follows immediately.
\end{proof}

\begin{remark} \label{remark:complex-case}
In contrast, we already saw in \cite{Kammeyer-Kionke:rigidity} that for \(\mathbb{K} = \C\), all lattices in \(\mathbf{G}(\C)\), cocompact or not, are profinitely solitary if the complex group \(\mathbf{G}\) has type \(E_8\), \(F_4\), or \(G_2\).  But if the complex group \(\mathbf{G}\) has one of the other types (except possibly \(E_6\) and \(A_n\) where CSP is open), then a non-cocompact lattice \(\Gamma \subset \mathbf{G}(\C)\) is not necessarily profinitely solitary.  Indeed, \(K_\Gamma\) is an imaginary quadratic field and now inner twists over \(\mathfrak{p}\)-adic fields exist.  Hence \(\mathbf{G}\)-arithmetic pairs which are non-isomorphic but locally equivalent to \((K_\Gamma, \mathbf{H}_\Gamma)\) occur as \(K_\Gamma\)-groups which have the same two non-isomorphic \(\Q_p\)-forms over split primes \(p\) as \(\mathbf{H}_\Gamma\), but in different order; compare the proof of \cite{Kammeyer-Kionke:rigidity}*{Theorem~1.1}.
\end{remark}

\begin{bibdiv}
  \begin{biblist}

    \bib{Bass-Milnor-Serre:solution}{article}{
   author={Bass, H.},
   author={Milnor, J.},
   author={Serre, J.-P.},
   title={Solution of the congruence subgroup problem for ${\rm
   SL}_{n}\,(n\geq 3)$ and ${\rm Sp}_{2n}\,(n\geq 2)$},
   journal={Inst. Hautes \'{E}tudes Sci. Publ. Math.},
   number={33},
   date={1967},
   pages={59--137},
   issn={0073-8301},
   review={\MR{244257}},
 }
 
\bib{Borel:Clifford}{article}{
   author={Borel, Armand},
   title={Compact Clifford-Klein forms of symmetric spaces},
   journal={Topology},
   volume={2},
   date={1963},
   pages={111--122},
   issn={0040-9383},
   review={\MR{146301}},
}

 \bib{Borel:alg-groups}{book}{
   author={Borel, Armand},
   title={Linear algebraic groups},
   series={Graduate Texts in Mathematics},
   volume={126},
   edition={2},
   publisher={Springer-Verlag, New York},
   date={1991},
   pages={xii+288},
   isbn={0-387-97370-2},
   review={\MR{1102012}},
}
    
  \bib{Borel-Tits:homomorphismes}{article}{
   author={Borel, Armand},
   author={Tits, Jacques},
   title={Homomorphismes ``abstraits'' de groupes alg\'{e}briques simples},
   language={French},
   journal={Ann. of Math. (2)},
   volume={97},
   date={1973},
   pages={499--571},
   issn={0003-486X},
   review={\MR{0316587}},
 }

 \bib{Corlette:superrigidity}{article}{
   author={Corlette, Kevin},
   title={Archimedean superrigidity and hyperbolic geometry},
   journal={Ann. of Math. (2)},
   volume={135},
   date={1992},
   number={1},
   pages={165--182},
   issn={0003-486X},
   review={\MR{1147961}},
 }

 \bib{Gromov-Schoen:superrigidity}{article}{
   author={Gromov, Mikhail},
   author={Schoen, Richard},
   title={Harmonic maps into singular spaces and $p$-adic superrigidity for
   lattices in groups of rank one},
   journal={Inst. Hautes \'{E}tudes Sci. Publ. Math.},
   number={76},
   date={1992},
   pages={165--246},
   issn={0073-8301},
   review={\MR{1215595}},
}
 
\bib{Kammeyer:rational}{article}{
   author={Kammeyer, Holger},
   title={An explicit rational structure for real semisimple Lie algebras},
   journal={J. Lie Theory},
   volume={24},
   date={2014},
   number={2},
   pages={307--319},
   issn={0949-5932},
   review={\MR{3235892}},
}

\bib{Kammeyer:commensurability}{article}{
   author={Kammeyer, Holger},
   title={Profinite commensurability of $S$-arithmetic groups},
   journal={Acta Arith.},
   volume={197},
   date={2021},
   number={3},
   pages={311--330},
   issn={0065-1036},
   review={\MR{4194949}},
   doi={10.4064/aa200401-23-7},
 }
 
\bib{Kammeyer-Kionke:rigidity}{article}{
  author={Kammeyer, Holger},
  author={Kionke, Steffen},
  title={On the profinite rigidity of lattices in higher rank Lie groups},
  year={2020},
  review={\arXiv{2009.13442}},
}

 \bib{Klingen:similarities}{book}{
   author={Klingen, Norbert},
   title={Arithmetical similarities},
   series={Oxford Mathematical Monographs},
   note={Prime decomposition and finite group theory;
   Oxford Science Publications},
   publisher={The Clarendon Press, Oxford University Press, New York},
   date={1998},
   pages={x+275},
   isbn={0-19-853598-8},
   review={\MR{1638821}},
}
 
 \bib{Kneser:galois}{article}{
   author={Kneser, Martin},
   title={Galois-Kohomologie halbeinfacher algebraischer Gruppen \"{u}ber
   ${\germ p}$-adischen K\"{o}rpern. II},
   language={German},
   journal={Math. Z.},
   volume={89},
   date={1965},
   pages={250--272},
   issn={0025-5874},
   review={\MR{188219}},
}

\bib{Lubotzky:non-arithmetic-rigid}{article}{
   author={Lubotzky, Alexander},
   title={Some more non-arithmetic rigid groups},
   conference={
      title={Geometry, spectral theory, groups, and dynamics},
   },
   book={
      series={Contemp. Math.},
      volume={387},
      publisher={Amer. Math. Soc., Providence, RI},
   },
   date={2005},
   pages={237--244},
   review={\MR{2180210}},
 }
 
 \bib{Margulis:discrete-subgroups}{book}{
   author={Margulis, G. A.},
   title={Discrete subgroups of semisimple Lie groups},
   series={Ergebnisse der Mathematik und ihrer Grenzgebiete (3) [Results in
   Mathematics and Related Areas (3)]},
   volume={17},
   publisher={Springer-Verlag, Berlin},
   date={1991},
   pages={x+388},
   isbn={3-540-12179-X},
   review={\MR{1090825}},
 }
 
 \bib{Platonov-Rapinchuk:algebraic-groups}{book}{
   author={Platonov, Vladimir},
   author={Rapinchuk, Andrei},
   title={Algebraic groups and number theory},
   series={Pure and Applied Mathematics},
   volume={139},
   note={Translated from the 1991 Russian original by Rachel Rowen},
   publisher={Academic Press, Inc., Boston, MA},
   date={1994},
   pages={xii+614},
   isbn={0-12-558180-7},
   review={\MR{1278263}},
 }

\bib{Raghunathan:on-csp}{article}{
   author={Raghunathan, M. S.},
   title={On the congruence subgroup problem},
   journal={Inst. Hautes \'{E}tudes Sci. Publ. Math.},
   number={46},
   date={1976},
   pages={107--161},
   issn={0073-8301},
   review={\MR{507030}},
}

\bib{Reid:profinite}{article}{
   author={Reid, Alan W.},
   title={Profinite properties of discrete groups},
   conference={
      title={Groups St Andrews 2013},
   },
   book={
      series={London Math. Soc. Lecture Note Ser.},
      volume={422},
      publisher={Cambridge Univ. Press, Cambridge},
   },
   date={2015},
   pages={73--104},
   review={\MR{3445488}},
 }
 
 \bib{SGA3-3}{book}{
   author={Artin, M.},
   author={Bertin, J. E.},
   author={Demazure, M.},
   author={Gabriel, P.},
   author={Grothendieck, A.},
   author={Raynaud, M.},
   author={Serre, J.-P.},
   title={Sch\'{e}mas en groupes. Fasc. 7: Expos\'{e}s 23 \`a 26},
   language={French},
   series={S\'{e}minaire de G\'{e}om\'{e}trie Alg\'{e}brique de l'Institut des Hautes \'{E}tudes
   Scientifiques},
   volume={1963/64},
   publisher={Institut des Hautes \'{E}tudes Scientifiques, Paris},
   date={1965/1966},
   pages={ii+260 pp.},
   review={\MR{0207710}},
 }

\bib{Stover:lattices-pu}{article}{
   author={Stover, Matthew},
   title={Lattices in ${\rm PU}(n,1)$ that are not profinitely rigid},
   journal={Proc. Amer. Math. Soc.},
   volume={147},
   date={2019},
   number={12},
   pages={5055--5062},
   issn={0002-9939},
   review={\MR{4021068}},
}
 
\end{biblist}
\end{bibdiv}

\end{document}